\newtheorem{theorem}{Theorem}[section]
\newtheorem{definition}[theorem]{Definition}
\newtheorem{lemma}[theorem]{Lemma}
\newenvironment{proof}[1][Proof]{\textbf{#1.} }{\ \rule{0.5em}{0.5em}}
\newcommand{\dom}{{\rm domain}}
\newcommand{\E}{{\rm \bf E}}
\newcommand{\image}{{\rm image}}
\newcommand{\codim}{{\rm codim}}
\newcommand{\rmd}{{\mathrm d}}
\newcommand{\dist}{{\rm dist}}
\newcommand{\dN}{{\bf N}}
\newcommand{\dR}{{\bf R}}
\newcommand{\ep}{\varepsilon}
\newcounter{figurecounter}
\begin{document}

\title{Sunspot Equilibrium in Positive Recursive General Quitting Games%
\thanks{The authors thank Hari Govindan, Ehud Lehrer, and John Levy for useful discussions.
E. Solan acknowledges the support of the Israel Science Foundation, grant \#217/17.}}

\author{Eilon Solan and Omri N.~Solan%
\thanks{The School of Mathematical Sciences, Tel Aviv
University, Tel Aviv 6997800, Israel. e-mail: eilons@post.tau.ac.il, omrisola@post.tau.ac.il.}}

\maketitle

\begin{abstract}
We prove that positive recursive general quitting games, which are quitting games in which each player may have more than one continue action, admit a
sunspot $\ep$-equilibrium, for every $\ep > 0$.
To this end we show that the equilibrium set of strategic-form games can be uniformly approximated by a smooth manifold,
and develop a new fixed-point theorem for smooth manifolds.
\end{abstract}

\noindent\textbf{Keywords:} Stochastic games, general quitting games, uniform equilibrium, sunspot equilibrium, equilibrium manifold.

\section{Introduction}

One of the central open questions in game theory to date is whether every multiplayer stochastic games admits a uniform equilibrium payoff.
Mertens and Neyman (1981) proved that two-player zero-sum stochastic games admit a uniform value,
Vieille (2000a, 2000b) proved that two-player nonzero-sum stochastic games admit a uniform equilibrium payoff,
and Solan (1999) proved that three-player absorbing games admit a uniform equilibrium payoff.
Solan and Vieille (2001) presented the family of quitting games, and showed that a certain class of multiplayer quitting games admit a uniform equilibrium payoff.
Further results regarding the existence of uniform equilibrium in quitting games were proven by Simon (2012) and Solan and Solan (2019).

While the existence of a uniform equilibrium payoff in general stochastic games is still an open problem,
the existence of an extensive-form correlated equilibrium payoff in multiplayer stochastic game
was proven by Solan and Vieille (2002).
Recall that an extensive-form correlated equilibrium payoff is a uniform equilibrium payoff in an extended game,
which includes a correlation device that sends at every stage a private signal to each player,
where the signal can depend on past signals sent to all players.
Solan and Vohra (2001, 2002) proved that every absorbing game admits a normal-form correlated equilibrium payoff,
which is a uniform equilibrium payoff in an extended game that includes a correlation device
that sends one private signal to each player at the outset of the game.

Recently Solan and Solan (2019) proved that every quitting game admits a sunspot equilibrium payoff,
which is an equilibrium payoff in an extended game that includes a correlation device
that sends at every stage a public signal that is  uniformly distributed on $[0,1]$ and independent of past signals and play.
%Solan and Solan (2019) moreover provided a sufficient condition that ensures the existence of a uniform equilibrium payoff in quitting games.

In this paper we extend the result of Solan and Solan (2019) to a more general class of absorbing games,
namely, the class of \emph{positive recursive general quitting games}.
These are quitting games in which
(a) each player has a single quitting action and possibly several continue actions,
(b) the nonabsorbing payoff is 0,
and (c) the absorbing payoffs are nonnegative.

In addition to proving that a sunspot equilibrium payoff exists in the class of positive recursive general quitting games,
the paper has several contributions, which are needed in the proof of the main result.
\begin{itemize}
\item   We show that the equilibrium set can be uniformly approximated by smooth manifolds,
a property that allows us to use topological results that require manifolds to be smooth.
\item   We develop a new fixed point result for smooth manifolds.
\item   We develop a new technique for studying multiplayer absorbing games, which reduces an absorbing game into a collection of quitting games.
\item   As noted by Solan, Solan, and Solan (2018), our results imply that if at least two players
have at least two continue actions, then the positive recursive general quitting game admits a uniform equilibrium payoff.
\end{itemize}

The paper is organized as follows.
The model and the main game theoretic result are described in Section~\ref{section:model}.
The proof for the case in which one player has two continue actions and all other players
have one continue action appears in Section~\ref{section:sketch}.
Section~\ref{section:topology} presents the results in topology that we need in the main proofs,
and Section~\ref{section:equilibrium} shows that the equilibrium set can be uniformly approximated by smooth manifolds.
In Section~\ref{section:proof} we provide the proof of the main result.
Section~\ref{section:extensions} discusses extensions of our main game theoretic result to other classes of absorbing games.

\section{The Model and Main Results}
\label{section:model}

\subsection{General Quitting Games}

\begin{definition}
A \emph{general quitting game} is a vector $\Gamma = (I,(A_i^c)_{i \in I},u)$ where
\begin{itemize}
\item   $I$ is a finite set of \emph{players}.
\item   $A_i^c$ is a finite nonempty set of \emph{continue actions},
for each player $i \in I$.
The set of all \emph{actions} of player~$i$ is $A_i := A_i^c \cup \{Q_i\}$, where $Q_i$ is interpreted as a \emph{quitting action}.
The set of all \emph{action profiles} is $A := \times_{i \in I} A_i$
and the set of all \emph{absorbing action profiles} is $A_* := A \setminus \left(\times_{i \in I} A_i^c \right)$.
\item   $u : A \to [-1,1]^I$ is a \emph{payoff function}.
\end{itemize}
\end{definition}

The game proceeds as follows.
At every stage $t \in \dN$, each player $i \in I$ chooses an action $a_i^t \in A_i$.
Let $a^t := (a_i^t)_{i \in I}$ be the action profile chosen at stage $t$.
We denote by $t_*$ the first stage in which a quitting action is played;
that is,
\[ t_* := \min\{ t \in \dN \colon a^t \in A_*\}, \]
with the convention that the minimum of an empty set is $\infty$.

A \emph{mixed action} of player $i$ is an element of $\Delta(A_i)$.
Each action $a_i \in A_i$ is identified with the mixed action that assigns probability 1 to $a_i$.
A \emph{(behavior) strategy} of player~$i$ is a function $\sigma_i \colon \left(\cup_{t=0}^\infty A^t\right) \to \Delta(A_i)$.
A \emph{strategy profile} is a vector of strategies $\sigma = (\sigma_i)_{i \in I}$, one for each player.
We identify each mixed action profile $x \in X := \times_{i \in I} \Delta(A_i)$ with the stationary strategy profile that plays $x$ at every stage.
Every strategy profile $\sigma$ induces a probability distribution over the set of plays
$A^\infty$.
Denote by $\E_\sigma$ the corresponding expectation operator.
The (undiscounted) \emph{payoff} under strategy profile $\sigma$ is
\[ \gamma(\sigma) := \E_\sigma\left[\limsup_{T \to \infty} \frac{1}{T}\sum_{t=1}^T u(a^{\min\{t,t_*\}})\right]. \]
Thus, the play is effectively terminated at stage $t_*$.

\begin{definition}
Let $(I,(A_i^c)_{i \in I},u)$ be a general quitting game and let $\ep \geq 0$.
A strategy profile $\sigma = (\sigma_i)_{i \in I}$ is an \emph{$\ep$-equilibrium}
if for every player $i \in I$ and every strategy $\sigma'_i$ of player~$i$,
\[ \gamma_i(\sigma) \geq \gamma_i(\sigma'_i,\sigma_{-i}) - \ep. \]
\end{definition}

The equilibrium concept that we study in this paper is \emph{undiscounted equilibrium}.
By arguments similar to those of Solan and Vieille (2001, Section 2.6),
our results apply to the stronger notion of \emph{uniform equilibrium}.

General quitting games in which each player has a single continue action are called \emph{quitting games}.
Flesch, Thuijsman, and Vrieze (1997)
studied a specific three-player quitting game and identified the set of its uniform equilibrium payoffs.

Solan (1999) proved that every three-player absorbing game admits an $\ep$-equilibrium,
for every $\ep > 0$.
To date it is not known whether this result extends to absorbing games with more than three players;
for partial results on the existence of undiscounted equilibrium in multiplayer quitting games,
see Solan and Vieille (2001), Simon (2012), and Solan and Solan (2019).

In this paper we will be interested in the class of positive recursive general quitting games,
which we define now.

\begin{definition}
A general quitting game $\Gamma = (I,(A_i^c)_{i \in I},u)$ is \emph{recursive}
if $u(a) = \vec{0}$ for every nonabsorbing action profile $a \in \times_{i \in I} A_i^c$.
A general quitting game $\Gamma = (I,(A_i^c)_{i \in I},u)$ is \emph{positive} if $u_i(a) \geq 0$
for every player $i \in I$ and every action profile $a \in A$.
\end{definition}

\subsection{Sunspot Equilibrium}

We enrich the general quitting game $\Gamma$ by introducing a public correlation device:
at the beginning of every stage $t \in \dN$ the players observe a public signal $y^t \in [0,1]$ that is drawn according to the uniform distribution,
independently of past signals and play.
The extended game is denoted by $\Gamma^E$.
The set of finite histories in the game $\Gamma^E$ is $H^E := \cup_{t \in \dN} \left([0,1]^t \times A^{t-1}\right)$.

A \emph{strategy} of player~$i$ in the game $\Gamma^E$ is a sequence of measurable functions $\xi_i = (\xi_i^t)_{t \in \dN}$,
where $\xi^t_i : [0,1]^{t} \times A^{t-1} \to \Delta(A_i)$.
The interpretation of $\xi_i^t$ is that if the play was not terminated before stage $t$,
then at stage~$t$ player~$i$ plays the mixed action $\xi_i^t(y^1,a^1,y^2,a^2,\cdots,a^{t-1},y^t)$.

Every strategy profile $\xi = (\xi_i)_{i \in I}$ induces a probability distribution over the set of plays in the game with public correlation device,
with a corresponding expectation operator that is denoted by $\E_\xi$.
Denote by
\[ \gamma(\xi) := \E_\xi\left[\limsup_{T \to \infty} \frac{1}{T}\sum_{t=1}^T u(a^{\min\{t,t_*\}})\right] \]
the expected undiscounted \emph{payoff} under strategy profile $\xi$.
An $\ep$-equilibrium in the extended game $\Gamma^E$ is called a sunspot $\ep$-equilibrium of the original game $\Gamma$.

\begin{definition}
A strategy profile $\xi$ is a \emph{sunspot $\ep$-equilibrium} of $\Gamma$
if it is an $\ep$-equilibrium in the extended game $\Gamma^E$, that is, if
for every $i \in I$ and every strategy $\xi'_i$ of player~$i$ we have
\[ \gamma_i(\xi) \geq \gamma_i(\xi'_i,\xi_{-i})-\ep. \]
\end{definition}

Solan and Solan (2019) proved that every quitting game admits a sunspot $\ep$-equilibrium, for every $\ep > 0$.
Our main game theoretic result concerns the extension of this result to positive recursive general quitting games.

\begin{theorem}
\label{theorem:main}
Every positive recursive general quitting game admits a sunspot $\ep$-equilibrium, for every $\ep > 0$.
\end{theorem}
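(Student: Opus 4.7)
The plan is to decompose the problem into an inner quitting-game layer and an outer continuation-action layer, using the sunspot to randomize in both, and then couple them via a manifold-based fixed-point argument. Fix a profile of mixed continuation actions $x^c = (x_i^c)_{i \in I} \in \times_{i \in I}\Delta(A_i^c)$ and a profile $p = (p_i)_{i \in I}\in[0,1]^I$ of per-stage quitting probabilities. By recursiveness the nonabsorbing payoff vanishes, so the stationary payoff induced by $(x^c,p)$ depends only on which absorbing profile is hit at the moment of quitting. For each fixed $x^c$ this defines an auxiliary quitting game $\Gamma(x^c)$ (in the sense that each player has a single effective continue action, namely ``draw from $x_i^c$''), to which I would apply the sunspot $\varepsilon$-equilibrium existence theorem of Solan and Solan (2019), obtaining a sunspot-randomized quitting profile and a payoff vector $v(x^c)$.

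Next I would impose self-consistency on $x^c$. Since continuation itself yields payoff zero, player~$i$'s incentive to use a particular action $a_i \in A_i^c$ is entirely determined by the conditional expected absorbing payoff triggered when $i$ plays $a_i$ while some other player simultaneously quits. Incentive compatibility on $A_i^c$ therefore becomes a system of linear equalities on integrals of absorbing payoffs against the sunspot measure, one per support action of each player. These equalities cut out a semialgebraic variety inside $\times_{i \in I}\Delta(A_i^c)$; invoking the approximation result of Section~\ref{section:equilibrium}, I would replace this variety by a smooth manifold $M_\varepsilon$ that is uniformly close to it, and then apply the new fixed-point theorem for smooth manifolds of Section~\ref{section:topology} to the best-response map on $M_\varepsilon$, locating a self-consistent $\overline{x}^c$.

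The main obstacle is the coupling between the two layers: the sunspot $\varepsilon$-equilibrium of $\Gamma(x^c)$ need not vary continuously in $x^c$, and multiplicity of equilibria in the inner game can make the outer best-response correspondence ill-behaved. This is precisely the reason that a smooth approximation of the equilibrium correspondence, together with a tailor-made fixed-point theorem, is developed in Sections~\ref{section:topology} and~\ref{section:equilibrium}. Positivity is essential because it rules out escape to payoff $0$ by deviating to perpetual continuation, since the candidate equilibrium payoffs are already nonnegative, while recursiveness eliminates intermediate incentives so that only the absorbing payoff structure governs the incentive constraints. Finally, to lift the stationary object $(\overline{x}^c,\overline{p})$ together with its inner sunspot randomization into a genuine sunspot $\varepsilon$-equilibrium of $\Gamma$, I would append standard threat-and-punishment continuations in the style of Solan and Vieille (2001), using $\overline{x}^c$ and minmax punishments against any detected deviator.
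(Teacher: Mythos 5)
Your high-level architecture — parametrize by a profile $x^c$ of mixed continuation actions, reduce to an auxiliary quitting game, impose self-consistency via a fixed-point argument on a smooth manifold, and then lift with threat-and-punishment — matches the paper's strategy. But there is a genuine gap in how you propose to run the fixed-point argument, and it comes exactly at the obstacle you identify but do not resolve.

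You want to apply the Solan--Solan (2019) sunspot-equilibrium existence theorem to each auxiliary game $\Gamma(x^c)$ and then do a fixed-point argument over the resulting payoff map $x^c \mapsto v(x^c)$. This does not work: a sunspot $\ep$-equilibrium of $\Gamma(x^c)$ is a strategy profile of an infinite-horizon game with a public correlation device, it is far from unique, and there is no selection of it (let alone a smooth one) that fits the manifold framework. Crucially, the approximation result of Section~\ref{section:equilibrium} does not approximate ``the sunspot equilibrium correspondence'' or an arbitrary semialgebraic variety cut out by indifference conditions; it approximates the Kohlberg--Mertens Nash equilibrium set of \emph{one-shot strategic-form games} by the manifolds $M_n$ of $O_n$-equilibria. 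To make that result applicable, one has to replace the sunspot object by a one-shot object. The paper does this via a dichotomy that your proposal is missing: by Theorem~\ref{theorem:21}, for each auxiliary quitting game either Condition~(A.1) holds, in which case Lemma~\ref{lemma:relation:1} already yields the sunspot $\ep$-equilibrium of $\Gamma$ and nothing more is needed, or Condition~(A.2) holds, in which case limits of \emph{discounted stationary} equilibria of the auxiliary games are uniformly absorbing. In the second case one never touches sunspot equilibria in the fixed-point argument at all; one works with $\lambda$-discounted stationary equilibria, which are Nash equilibria of finite \emph{binary} games and hence fit into the $M_n$ machinery, and the manifold lives in $N \times Z$ (continuation mixes times quitting probabilities), not in $N$ alone as you suggest. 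The self-consistency condition is then encoded by a soft-max best-response map $g^{[n]}$ on continuation actions, and Theorem~\ref{theorem:6} is applied to the absorbing part $M^{out}_{\lambda,n,\ep}$ of that manifold, with degree computed via Theorem~\ref{theorem:5}.

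In short: you correctly diagnose that the inner sunspot equilibrium is ill-behaved in $x^c$, but your proposed cure (smoothly approximate it and apply the fixed-point theorem) is not available, because the smooth approximation machinery only applies to strategic-form Nash sets. The missing idea is the reduction, via Theorem~\ref{theorem:21} and Lemmas~\ref{lemma:relation:1}--\ref{lemma:relation:2}, to a regime where discounted stationary equilibria of the auxiliary games are bounded away from nonabsorption, after which the entire fixed-point argument runs on stationary objects. Your observations on the roles of positivity and recursiveness, and on the final lift via statistical tests and punishment, are consistent with the paper.
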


\subsection{Sunspot Equilibrium in Quitting Games}

In this section we restrict attention to a fixed quitting game $\Gamma$.
For each player $i \in I$ we denote his single continue action by $C_i$.
Denote by $\vec C = (C_i)_{i \in I}$ the action profile under which all players continue.
For every player $i \in I$ denote by $\vec C_{-i} = (C_j)_{j \neq i}$ the action profile in which all players except player~$i$ continue.

We will use the $\lambda$-discounted version of the game,
where the payoff is given by
\[ \gamma^\lambda(\sigma) :=
\E_\sigma\left[\lambda \sum_{t=1}^\infty (1-\lambda)^{t-1} u(a^{\min\{t,t_*\}}) \right], \]
and the concept of equilibrium is defined w.r.t.~the $\lambda$-discounted payoff.

Solan and Solan (2019) studied sunspot equilibrium in quitting games and proved the following result.%
\footnote{The main result of Solan and Solan (2019) involves the concept of normal players.
In a positive recursive quitting game, all players are normal,
hence the statement that appears here is equivalent to the one of Solan and Solan (2019).}
To state the result we need notations.
For every mixed action profile $x \in X$ denote by $p(x) := 1-\prod_{i \in I} x_i(A_i^c)$ the per-stage probability of absorption under $x$.
The mixed action profile $x$ is \emph{absorbing} if $p(x) > 0$,
and \emph{nonabsorbing} if $p(x)= 0$.

\begin{theorem}[Solan and Solan, 2019]
\label{theorem:21}
Let $\Gamma = (I,(\{C_i,Q_i\})_{i \in I}, u)$ be a positive recursive quitting game.
At least one of the following conditions holds.
\begin{enumerate}
\item[A.1]
For every $\ep > 0$ the game $\Gamma$ admits a sunspot $\ep$-equilibrium $\xi$ in which,
after every finite history, at most one player $i$ plays the action $Q_i$,
and the probability by which this player plays the action $Q_i$ is at most $\ep$.
Moreover,
the expected payoff to each player $i \in I$ after every finite history along which the play was not yet absorbed is at least
$u_i(Q_i,\vec C_{-i})$:
\[ \gamma_i(\xi \mid h^t) \geq u_i(Q_i,\vec C_{-i}), \ \ \ \forall i \in I, \forall h^t \in H^E. \]
\item[A.2]
There is $\eta > 0$ such that for every function $\lambda \mapsto x_{\lambda}$
that maps every $\lambda \in (0,1]$ to a $\lambda$-discounted stationary equilibrium $x_\lambda$ in $\Gamma$
such that $\lim_{\lambda \to 0} x_\lambda$ exists, we have
$\lim_{\lambda \to 0} p(x_\lambda) \geq \eta$.
\end{enumerate}
\end{theorem}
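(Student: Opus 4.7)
My approach would hinge on analyzing the limiting behavior of $\lambda$-discounted stationary equilibria, whose existence for each $\lambda\in(0,1]$ is standard. Using the semi-algebraic structure of the best-reply correspondence (Bewley--Kohlberg), I would choose a selection $\lambda\mapsto x_\lambda$ that depends semi-algebraically on $\lambda$; by Puiseux expansion this selection has well-defined limits $x^{*}:=\lim_{\lambda\to 0} x_\lambda$ and $p^{*}:=\lim_{\lambda\to 0} p(x_\lambda)$. If every such selection has $p^{*}\geq\eta$ for a uniform $\eta>0$, we are in case A.2 by definition. Otherwise some selection satisfies $p^{*}=0$, and the rest of the proof builds the sunspot equilibrium of A.1 from it.

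\textbf{Extracting the data of the equilibrium.} Assuming $p^{*}=0$ forces $x^{*}=\vec C$ and $\alpha_i^\lambda:=x_{\lambda,i}(Q_i)\to 0$ for every player. Pass to a further semi-algebraic subsequence so that $\mu_i:=\lim_\lambda \alpha_i^\lambda/\sum_{j\in I}\alpha_j^\lambda$ exists, defining a probability vector $\mu\in\Delta(I)$. Since absorption in the limit is due, up to second-order terms in $\sum_j \alpha_j^\lambda$, to a single quitter whose identity is distributed as $\mu$, one computes
\[ v_i\;:=\;\lim_{\lambda\to 0}\gamma^\lambda_i(x_\lambda)\;=\;\sum_{j\in I}\mu_j\,u_i(Q_j,\vec C_{-j}). \]
The no-profitable-deviation inequality $\gamma^\lambda_i(x_\lambda)\geq u_i(Q_i,x_{\lambda,-i})$ becomes, in the limit, the key inequality $v_i\geq u_i(Q_i,\vec C_{-i})$ for every $i\in I$. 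Moreover, whenever $\mu_i>0$, player $i$ randomizes between $Q_i$ and $C_i$ for all small $\lambda$, and the indifference condition sharpens this to the equality $v_i=u_i(Q_i,\vec C_{-i})$.

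\textbf{Construction of the sunspot strategy.} Pick $\delta\in(0,\ep]$ small and partition $[0,1]$ into intervals $(B_j)_{j\in I}$ of length $\mu_j$. Define the stationary profile $\xi$ by: at every stage $t$, if $y^t\in B_j$ then player $j$ plays $Q_j$ with probability $\delta$ and $C_j$ otherwise, while every other player plays a continue action. Under $\xi$, each stage is absorbing with probability $\delta$, after every non-absorbing history at most one player plays $Q_i$ (with probability $\delta$), and by stationarity the continuation payoff after every such history equals $v_i\geq u_i(Q_i,\vec C_{-i})$. Player $i$'s best unilateral deviation decomposes at each stage into: (a) quitting when no player is selected in $i$'s favor---blocked by $v_i\geq u_i(Q_i,\vec C_{-i})$; (b) refusing to quit when $y^t\in B_i$---blocked by the equality $v_i=u_i(Q_i,\vec C_{-i})$ whenever $\mu_i>0$; and (c) ``piggyback'' quitting when $y^t\in B_j$ for some $j\neq i$. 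Choosing $\delta$ small concentrates the gain from (c) on rare events.

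\textbf{Main obstacle.} The deviation (c) is the hard one: when $\mu_i>0$ we have $v_i=u_i(Q_i,\vec C_{-i})$, so there is no slack in the continuation value to absorb a piggyback gain of order $\delta\big(u_i(Q_i,Q_j,\vec C_{-\{i,j\}})-u_i(Q_j,\vec C_{-j})\big)$. The rescue must come from a second-order expansion of the discounted indifference condition $u_i(Q_i,x_{\lambda,-i})=u_i(C_i,x_{\lambda,-i})$: expanded in the vanishing $\alpha_j^\lambda$, this produces a linear identity relating the very differences that appear in the piggyback gain, and in the limit it forces $\sum_{j\neq i}\mu_j\big(u_i(Q_i,Q_j,\vec C_{-\{i,j\}})-u_i(Q_j,\vec C_{-j})\big)\leq 0$, killing the deviation. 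A secondary difficulty is to produce, from the failure of A.2, a single semi-algebraic selection $\lambda\mapsto x_\lambda$ with $\lim p(x_\lambda)=0$ rather than a separate selection for each $\eta$; this is where Puiseux expansion of the semi-algebraic function $\lambda\mapsto\inf\{p(x):x\text{ is a $\lambda$-equilibrium}\}$ is essential.
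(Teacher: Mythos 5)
This statement is cited in the paper from Solan and Solan (2019), ``Quitting Games and Linear Complementarity Problems,'' and the paper under review does not prove it; it merely records the statement (with a footnote explaining that in a positive recursive quitting game all players are normal, so the form quoted here is equivalent to the one in the cited work). So there is no ``paper's own proof'' to compare against. That reference does not, as far as I recall, proceed via the discounted-selection/sunspot-construction route you sketch, but via an encoding of stationary equilibria as solutions to a linear complementarity problem, so your plan is a genuinely different route even at the level of strategy.

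Treating your proposal on its own terms, the dichotomy in Step~1 is fine: the function $\lambda\mapsto\inf\{p(x):x\text{ is a }\lambda\text{-discounted stationary equilibrium}\}$ is semialgebraic, so its limit $p^*$ as $\lambda\to 0$ exists; if $p^*>0$ we are in A.2 with $\eta=p^*$, and if $p^*=0$ a semialgebraic minimizing selection gives $\lim p(x_\lambda)=0$. The gap is in the passage from that selection to the claimed continuation value $v_i=\sum_j\mu_j u_i(Q_j,\vec C_{-j})$. Writing $\rho:=\lim_{\lambda\to 0}p(x_\lambda)/\lambda$ (which exists along a Puiseux refinement), the discounted equilibrium value satisfies $v_i=\tfrac{\rho}{\rho+1}\sum_j\mu_j u_i(Q_j,\vec C_{-j})$, not the full sum. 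Your sunspot strategy, however, has undiscounted continuation payoff exactly $\sum_j\mu_j u_i(Q_j,\vec C_{-j})$ after every history, since it absorbs with probability one and the distribution over absorbing action profiles is $\mu$. When $\rho<\infty$ and $\mu_i>0$, the discounted indifference gives $u_i(Q_i,\vec C_{-i})=\tfrac{\rho}{\rho+1}\sum_j\mu_j u_i(Q_j,\vec C_{-j})<\sum_j\mu_j u_i(Q_j,\vec C_{-j})$, so the designated quitter strictly prefers to continue and your deviation (b) is not ``blocked by equality''; it is in fact profitable. Nothing in your argument rules out $\rho<\infty$, so the stationary sunspot as defined does not close. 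Separately, the ``main obstacle'' you identify (the piggyback deviation~(c)) is left to a second-order expansion of the indifference condition that you assert but do not carry out; as written, that inequality is a conjecture rather than a derived consequence, and since it is exactly where the argument is hardest, the proposal does not yet constitute a proof even assuming the $\rho$ issue were fixed.
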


The next result, which follows from the continuity of the discounted payoff,
states that if the limit of stationary $\lambda$-discounted equilibria of a quitting game as the discount factor $\lambda$ goes to 0 is absorbing, then the limit is a stationary 0-equilibrium.

\begin{lemma}
\label{lemma:discounted}
Fix the set $I$ of players,
and for every $k \in \dN$ let $\Gamma^{[k]}=(I,(\{C_i,Q_i\})_{i \in I},u^{[k]})$ be a quitting game,
such that the sequence $(u^{[k]})_{k \in \dN}$ of payoff functions converges
to a payoff function $u$.
Let $(\lambda^{[k]})_{k \in \dN}$ be a sequence of discount factors that converges to 0.
For every $k \in \dN$ let $x^{[k]}$ be a stationary $\lambda^{[k]}$-discounted equilibrium in the quitting game $\Gamma^{[k]}$
such that the limit $x := \lim_{k \to \infty} x^{[k]}$ exists and satisfies $p(x) \in (0,\ep)$.
Then the quitting game $(I,(\{C_i,Q_i\})_{i \in I},u)$ admits an $\ep$-equilibrium for every $\ep > 0$.
If the quitting game $(I,(\{C_i,Q_i\})_{i \in I},u)$ is positive and recursive,
then $x$ is a stationary 0-equilibrium.
\end{lemma}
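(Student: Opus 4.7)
The plan is to pass to the limit $k\to\infty$ in the stationary $\lambda^{[k]}$-discounted equilibrium inequalities, exploiting the explicit closed form of the discounted payoff in a quitting game together with the fact that $p(x) > 0$ makes this form continuous at $\lambda = 0$. Against a stationary opponent profile in a quitting game, a best response can always be taken to be the pure stationary action with higher continuation value, so it suffices to establish the limit inequality for pure deviations $a_i \in \{C_i, Q_i\}$.

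For any stationary profile $y$ in a quitting game with payoff $u$, a direct computation yields
\[
\gamma^\lambda_i(y;u) \;=\; \frac{\lambda\, u_i(\vec C) \;+\; (1-\lambda)\sum_{a\in A_*}\bigl(\prod_j y_j(a_j)\bigr)u_i(a)}{\lambda + (1-\lambda)\,p(y)},
\]
a rational function of $(\lambda,y,u)$ that is jointly continuous wherever the denominator is positive. Since $p(x) > 0$ and $u^{[k]}\to u$, $x^{[k]}\to x$, $\lambda^{[k]}\to 0$, the left-hand side of the $\lambda^{[k]}$-equilibrium inequality $\gamma^{\lambda^{[k]}}_i(x^{[k]};u^{[k]}) \geq \gamma^{\lambda^{[k]}}_i(a_i,x^{[k]}_{-i};u^{[k]})$ converges to $\gamma_i(x;u)$, which equals the expected absorbing payoff under $x$. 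On the right-hand side, the case $a_i = Q_i$ is immediate since the denominator is identically $1$, and the case $a_i = C_i$ with $p(C_i,x_{-i}) > 0$ passes to the limit by the same continuity; both give $\gamma_i(x;u)\geq \gamma_i(a_i,x_{-i};u)$ in the limit game.

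The main obstacle is the remaining subcase $a_i = C_i$ with $p(C_i,x_{-i}) = 0$: here $\lambda^{[k]}$ and $p(C_i,x^{[k]}_{-i})$ both vanish, the denominator degenerates, and the limit of the right-hand side depends on the ratio $\lambda^{[k]}/p(C_i,x^{[k]}_{-i})$. The key observation is that $p(C_i,x_{-i})=0$ forces $x_j(Q_j) = 0$ for every $j\neq i$, which together with $p(x)>0$ forces $x_i(Q_i)>0$; the only absorbing event under $x$ is then ``$i$ quits while everyone else continues,'' so $\gamma_i(x;u) = u_i(Q_i,\vec C_{-i})$, whereas on the deviation side $\gamma_i(C_i,x_{-i};u) = u_i(\vec C)$ (continuing forever against non-quitting opponents). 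In the positive recursive case $u_i(\vec C) = 0$ and $u\geq 0$ give $\gamma_i(x;u) = u_i(Q_i,\vec C_{-i})\geq 0 = \gamma_i(C_i,x_{-i};u)$ automatically, and combining all three cases shows $x$ is a stationary $0$-equilibrium of $\Gamma$. For the weaker claim that $\Gamma$ admits an $\ep$-equilibrium for every $\ep > 0$ without positivity, the same limit argument shows $x^{[k]}$ itself is an $\ep$-equilibrium of $\Gamma$ for all sufficiently large $k$: in every case except the degenerate subcase the discrepancy is $o(1)$ by continuity, and in the degenerate subcase both sides remain uniformly bounded, so any residual discrepancy can be absorbed into $\ep$.
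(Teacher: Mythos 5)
Your argument for the positive recursive conclusion is essentially correct and matches the paper's indicated reasoning: in the degenerate subcase ($a_i = C_i$ with $p(C_i, x_{-i}) = 0$), you correctly identify that $x_j(Q_j) = 0$ for all $j \neq i$, that $p(x) > 0$ forces $x_i(Q_i) > 0$, and that $\gamma_i(x) = u_i(Q_i, \vec C_{-i}) \geq 0 = u_i(\vec C) = \gamma_i(C_i, x_{-i})$ under positivity and recursiveness. The reduction to pure stationary deviations and the handling of the non-degenerate cases by continuity of the rational-function formula are also fine.

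The general $\ep$-equilibrium conclusion, however, is not correctly handled. You claim that $x^{[k]}$ itself is an $\ep$-equilibrium and that ``in the degenerate subcase both sides remain uniformly bounded, so any residual discrepancy can be absorbed into $\ep$.'' Boundedness is not smallness. In the degenerate subcase, passing the inequality $\gamma^{\lambda^{[k]}}_i(x^{[k]}) \geq \gamma^{\lambda^{[k]}}_i(C_i, x^{[k]}_{-i})$ to the limit only bounds $\gamma_i(x)$ below by a convex combination of $u_i(\vec C)$ and the ``others quit first'' absorbing payoff, with weights governed by the unknown ratio $\lambda^{[k]}/p(C_i, x^{[k]}_{-i})$; the undiscounted deviation payoff is the full $u_i(\vec C)$, which can strictly exceed $\gamma_i(x)$ by an amount of order one. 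Hence the stationary profile $x$ (and each $x^{[k]}$) is generically not an $\ep$-equilibrium for small $\ep$ without further structure. The paper's argument here is different in kind: the $\ep$-equilibrium is a non-stationary profile obtained by supplementing $x$ with a threat of punishment at Player~$i$'s min-max level if absorption does not occur after sufficiently many stages; that $\gamma_i(x)$ is at least the min-max level, which is what makes the punishment deterrent, follows from the discounted equilibrium condition. Your proof omits this construction entirely, so the first conclusion of the lemma is not established.
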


The $\ep$-equilibrium that exists in the quitting game $(I,(\{C_i,Q_i\})_{i \in I},u)$ according to Lemma~\ref{lemma:discounted} may be of two possible types:
\begin{itemize}
\item   If under $x$ at least two players quit with positive probability,
then $x$ is a stationary 0-equilibrium of $(I,(\{C_i,Q_i\})_{i \in I},u)$.
\item   If under $x$ exactly one player, say, Player~1, quits with positive probability,
then Player~1 may find it beneficial to continue rather than to quit when the other players follow $x_{-1}$.
To guarantee that such a deviation is not profitable in the game $(I,(\{C_i,Q_i\})_{i \in I},u)$,
the other players will punish Player~1 at his min-max level
if the game is not absorbed after sufficiently many stages have elapsed.
\end{itemize}
If the game $(I,(\{C_i,Q_i\})_{i \in I},u)$ is positive and recursive,
then in the second case, Player~1 cannot profit by not quitting,
hence $x$ is a stationary 0-equilibrium in this case as well.

\subsection{A Family of Auxiliary Games}

In this section we fix a positive recursive general quitting game $\Gamma$ and we define a family of auxiliary quitting games,
which will prove essential for our proof technique.
After defining these auxiliary games we will provide two simple relations between equilibria in these games and equilibria in the original game.

Fix a general quitting game $\Gamma$.
For each player $i \in I$
denote an element $\alpha_i \in \Delta(A_i^c)$ by $\alpha_i^1 C_i^1 + \cdots + \alpha_i^{k_i} C_i^{k_i}$,
where $k_i := |A^c_i|$ is the number of continue actions of player~$i$ and $A_i^c = \{C_i^1,\cdots,C_i^{k_i}\}$.
For every vector $\alpha = (\alpha_1,\cdots,\alpha_{|I|}) \in \times_{i \in I} \Delta(A_i^c)$
and every $q \in \dR^I$ define an auxiliary quitting game $\Gamma^{\alpha,q}$
that is based on $\Gamma$ and is defined as follows:
\begin{itemize}
\item Whenever player~$i$ continues, it is as if he plays each
continue action $C_i^k$ in $\Gamma$ with probability
$\alpha_i^k$, for $1 \leq k \leq k_i$.
\item The nonabsorbing payoff is $q$.
\end{itemize}
Formally, the stage payoff in the game $\Gamma^{\alpha,q}$, denoted $u^{\alpha,q}(\cdot)$,
is defined as follows,
where $\vec C_J = (C_j)_{j \in J}$, $\vec Q_J = (Q_j)_{j \in J}$, and $\vec \alpha_J = (\alpha_j)_{j \in J}$ for every subset $J$ of players.
\begin{eqnarray*}
u^{\alpha,q}(\vec C) &:=& q,\\
u^{\alpha,q}(\vec C_J, \vec Q_{I \setminus J}) &:=&
u(\vec\alpha_J,\vec Q_{I \setminus J}), \ \ \ \emptyset \subseteq J \subset I.
\end{eqnarray*}
Denote by $\gamma^{\alpha,q}$ the payoff function of the auxiliary game $\Gamma^{\alpha,q}$.

For every $i \in I$, a mixed action of player~$i$ in the auxiliary game $\Gamma^{\alpha,q}$ is equivalent to an element $\widehat x_i \in [0,1]$,
which is interpreted as the probability that player~$i$ quits.
The mixed action $\widehat x_i$ corresponds to a mixed action $x_i = x_i(\widehat x_i,\alpha_i) \in \Delta(A_i)$ in the game $\Gamma$ as follows:
\begin{equation}
\label{equ:x}
\begin{split}
&x_i(Q_i) := \widehat x_i,\\
&x_i(C_i^k) := (1-\widehat x_i) \alpha_i^k, \ \ \ \forall k \in \{1,2,\ldots,k_i\}.
\end{split}
\end{equation}
This correspondence between mixed actions in the auxiliary game $\Gamma^{\alpha,q}$ and mixed actions in the original game $\Gamma$ can be used to map
strategies in the game $\Gamma^{\alpha,q,E}$ into strategies in the game $\Gamma^E$ as follows.
For every finite history $h^t = (y^1,a^1,\cdots,y^t) \in H^E$ let
$\widehat h^t = (\widehat y^1,\widehat a^1,\cdots,\widehat y^t)$ be the history in the auxiliary game $\Gamma^{\alpha,q,E}$ that is defined as follows:
\begin{eqnarray*}
\widehat y^k &:=& y^k, \ \ \ 1 \leq k \leq t,\\
\widehat a^k_i &:=&\left\{
\begin{array}{lll}
a^k_i, & \ \ \ \ \ & a^k_i = Q_i,\\
C_i, & & a^k_i \in A^c_i.
\end{array}
\right.
\end{eqnarray*}
In words, we replace all continue actions in $h^t$ by the unique continue action of the player in the auxiliary game.
Given a strategy $\widehat \xi_i$ in the game $\Gamma^{\alpha,q,E}$
we define a strategy $\xi_i$ in the game $\Gamma^E$ by
\begin{equation}
\label{corr:strategy}
\xi_i(h^t) := x_i(\widehat \xi_i(\widehat h^t),\alpha_i), \ \ \ \forall i \in I, \forall h^t \in H^E.
\end{equation}
The reader can verify that for every strategy profile $\widehat \xi$ in $\Gamma^{\alpha,q,E}$,
if the strategy profile $\xi$ is defined as in Eq.~\eqref{corr:strategy}, then
\begin{equation}
\label{corr:payoff}
\gamma(\xi) = \widehat \gamma^{\alpha,q}(\widehat \xi).
\end{equation}

\begin{lemma}
\label{lemma:relation:1}
Let $\Gamma$ be a positive recursive general quitting game and let $\ep > 0$ be sufficiently small.
Suppose that there exist $\alpha = (\alpha_1,\cdots,\alpha_{|I|}) \in \times_{i \in I} \Delta(A_i^c)$ and $q \in \dR^I$ such that
the game $\Gamma^{\alpha,q}$ admits a sunspot $\ep$-equilibrium $\widehat\xi$ satisfying
(a) for every finite history $\widehat h^t$ in the game $\Gamma^{\alpha,q,E}$,
\[ \gamma_i(\widehat \xi \mid \widehat h^t) \geq u_i(Q_i,\vec C_{-i}), \ \ \ \forall i \in I, \]
and
%under $\widehat \xi$ the play is absorbed with probability 1 provided that the history $h^t$ occurred, and
(b) under $\widehat \xi$, at every stage at most one player quits, and he does so with probability at most $\ep$.
Then the game $\Gamma$ admits a sunspot $3\ep$-equilibrium.
\end{lemma}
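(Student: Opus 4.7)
I propose the candidate $\xi$ in $\Gamma^E$ obtained from $\widehat\xi$ via the correspondence (\ref{corr:strategy}); by (\ref{corr:payoff}), $\gamma(\xi) = \widehat\gamma^{\alpha,q}(\widehat\xi)$. To show that $\xi$ is a sunspot $3\ep$-equilibrium of $\Gamma$, I fix a player $i$ and an arbitrary deviation $\xi_i'$ in $\Gamma^E$ and must prove $\gamma_i(\xi_i',\xi_{-i}) \le \gamma_i(\xi) + 3\ep$.

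The central device is to split $\xi_i'$ into its quitting component and its continue-action component. The quitting component, the function $h^t \mapsto \xi_i'(h^t)(Q_i)$, induces a strategy $\widehat\xi_i'$ in $\Gamma^{\alpha,q,E}$; its lift $\widetilde\xi_i' := x_i(\widehat\xi_i',\alpha_i)$ under (\ref{corr:strategy}) is the strategy in $\Gamma^E$ that agrees with $\xi_i'$ on quitting probabilities but plays $\alpha_i$ when continuing. Because $\xi_{-i}$ at any history depends only on the reduced history $\widehat h^t$ (it forgets which particular continue action was played), the joint law of the absorption time $T$, the identity of the quitter(s) at $T$, and $i$'s own quit/continue decisions is the same under $(\xi_i',\xi_{-i})$ and under $(\widetilde\xi_i',\xi_{-i})$. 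Applying (\ref{corr:payoff}) to $(\widehat\xi_i',\widehat\xi_{-i})$ together with the sunspot $\ep$-equilibrium property of $\widehat\xi$ then yields
\[ \gamma_i(\widetilde\xi_i',\xi_{-i}) = \widehat\gamma_i^{\alpha,q}(\widehat\xi_i',\widehat\xi_{-i}) \le \widehat\gamma_i^{\alpha,q}(\widehat\xi) + \ep = \gamma_i(\xi) + \ep. \]

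It remains to bound $\gamma_i(\xi_i',\xi_{-i}) - \gamma_i(\widetilde\xi_i',\xi_{-i}) \le 2\ep$. The two payoffs differ only in the event that absorption occurs at some stage $T$, some $j \neq i$ quits at $T$ while $i$ continues, and $\xi_i'$ plays a continue-action distribution $\alpha_i^{\prime T} \neq \alpha_i$ at that stage. Condition (b) bounds the per-stage probability of absorption by a player other than $i$ by $\ep$. Condition (a), which says that $u_i(Q_i,\vec C_{-i}) = u_i(Q_i,\vec\alpha_{-i})$ is a lower bound on every continuation value of $\widehat\xi$, allows me to reduce to the extremal deviation in which $\xi_i'$ never quits and plays a pointwise best continue action against the absorbing configuration: any quitting deviation is already controlled by the aux-game $\ep$-equilibrium property. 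I then compare this "never-quit, best-continue" deviation in $\Gamma$ against the corresponding "never-quit" deviation in $\Gamma^{\alpha,q,E}$, whose payoff is within $\ep$ of $\gamma_i(\xi)$ by the sunspot $\ep$-equilibrium property of $\widehat\xi$; the positive recursive structure of $\Gamma$ (nonabsorption contributes $0$ to the $\Gamma$-payoff) together with the $\ep$-smallness of the per-stage absorption probability and the hypothesis that $\ep$ is sufficiently small absorb the remaining $\ep$.

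The principal obstacle I expect is the cumulative control of the continue-action gain: although each stage contributes only $O(\ep)$ to the potential gain from choosing the best continue action, in principle infinitely many stages could sum to a much larger total. Resolving this requires carefully pairing the deviation in $\Gamma$ with an appropriate quitting-only deviation in the auxiliary game and exploiting both the positivity and recursion of $\Gamma$ and the smallness of $\ep$ to bound the contribution from histories on which the game is never absorbed.
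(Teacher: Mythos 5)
Your decomposition — quit component vs.\ continue-action component, with the quit component handled by the auxiliary game's sunspot $\ep$-equilibrium property — is a sensible start and matches the spirit of the paper's first observation. But the heart of your argument, the bound $\gamma_i(\xi_i',\xi_{-i}) - \gamma_i(\widetilde\xi_i',\xi_{-i}) \le 2\ep$, is false for the unadorned lift $\xi$, and the obstacle you flag in your last paragraph is not a technicality to be absorbed but exactly where the proof breaks. The gain from playing a ``pointwise best continue action'' accrues each time some $j\neq i$ quits while $i$ continues; the per-stage probability of that event is at most $\ep$, but the \emph{cumulative} absorption probability over the run of the game is of order $1$, not of order $\ep$. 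Concretely, if under $\alpha_i$ player $i$'s expected absorbing payoff (when, say, player $j$ quits alone) is $1/2$ but one continue action $C_i^k$ gives him absorbing payoff $1$ in that event, then by always playing $C_i^k$ he improves his unconditional payoff by a constant amount bounded away from $0$, no matter how small $\ep$ is. Conditions (a) and (b) give no leverage here: (a) only controls deviations to $Q_i$, and (b) only controls the per-stage absorption rate.

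The missing ingredient is the one the paper's sketch uses: the strategy profile $\xi$ obtained from Eq.~\eqref{corr:strategy} must be \emph{supplemented with statistical tests and threats of punishment}. The players in $I\setminus\{i\}$ monitor the empirical frequency of $i$'s continue actions; because the per-stage absorption probability is at most $\ep$, the game typically runs for $\Omega(1/\ep)$ stages before absorbing, which is long enough for the test to detect any persistent departure from $\alpha_i$ with high probability before the deviation can pay off. On detection, they switch to a punishment strategy holding $i$ to his min-max level; since the game is positive and recursive, this punishment nullifies the gain. Without this monitoring-and-punishment layer, $\xi$ alone is not a $3\ep$-equilibrium, and no amount of ``pairing with a quitting-only deviation'' in $\Gamma^{\alpha,q,E}$ will close the gap, because the auxiliary game simply cannot see which continue action $i$ chose.
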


\begin{proof}
The proof is standard, hence we provide only a sketch.
Let $\widehat\xi$ be a sunspot $\ep$-equilibrium in the game $\Gamma^{\alpha,q}$ that satisfies properties (a) and (b).
We will show that the strategy profile $\xi$ that is defined as in Eq.~\eqref{corr:strategy},
supplemented with statistical tests and threat of punishment,
satisfies the desired result.

Fix a player $i \in I$.
We will now check whether player~$i$ can profit by deviating from $\xi$.
Condition~(b) says that after every finite history, the play is $\ep$-close to $\vec C$,
and Eq.~\eqref{corr:payoff} implies that
$\gamma(\xi \mid h^t) = \widehat \gamma^{\alpha,q}(\widehat \xi \mid h^t)$ for every finite history $h^t \in H^E$.
Consequently, Condition~(a) implies that player~$i$ cannot gain more than $\ep$ by playing $Q_i$ after any finite history $h^t \in H^E$.
Thus, the only possible profitable deviation of player~$i$ is to change the probabilities by which he plays his continue actions.
Since the per-stage probability by which players quit is at most $\ep$,
and since player~$i$ plays the mixed action $\alpha_i$ until the game terminates,
provided $\ep$ is sufficiently small,
by conducting statistical tests players $I \setminus\{i\}$ can check whether player~$i$ plays his continue actions according to the mixed action $\alpha_i$,
and punish Player~1 at his min-max level if he is found deviating.
Since the game is positive and recursive, such a deviation cannot be profitable for player~$i$.
It follows that the strategy profile $\xi$ supplemented with statistical tests and threats of punishment is a $3\ep$-equilibrium in the game $\Gamma^E$.
\end{proof}

\begin{lemma}
\label{lemma:relation:2}
Let $\Gamma$ be a positive recursive general quitting game and let $\ep > 0$ be sufficiently small.
Suppose that there exist $\alpha \in \times_{i \in I} \Delta(A^c_i)$ and $q \in \dR^I$
such that the game $\Gamma^{\alpha,q}$ admits a stationary 0-equilibrium $\widehat x \in [0,1]^I$ such that $p(\widehat x) \in (0,\ep^2)$.
Then the game $\Gamma$ admits an $\ep$-equilibrium.
\end{lemma}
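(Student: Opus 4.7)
The plan is to mirror the proof of Lemma~\ref{lemma:relation:1}: use the correspondence in Eq.~\eqref{equ:x} to transfer $\widehat x$ back to a stationary profile in $\Gamma$ and supplement it with statistical tests and threats of punishment. Concretely, I would define $x_i\in\Delta(A_i)$ by $x_i(Q_i):=\widehat x_i$ and $x_i(C_i^k):=(1-\widehat x_i)\alpha_i^k$. Because $p(\widehat x)\in(0,\ep^2)$, the play under the stationary profile $x$ is absorbed almost surely, the nonabsorbing payoff $q$ plays no role in the long-run average, and the stationary analog of Eq.~\eqref{corr:payoff} gives $\gamma(x)=\widehat\gamma^{\alpha,q}(\widehat x)$.

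I would then specify the candidate $\ep$-equilibrium $\sigma^*$ in $\Gamma$ in three pieces: (i) on the equilibrium path each player $i$ plays $x_i$ at every stage; (ii) the other players record the empirical distribution of player $i$'s realized continue actions and compare it, via an $L^1$-type test with threshold of order $\ep$, to $\alpha_i$; (iii) as soon as player $i$'s test fires, the remaining players switch to the punishment strategy ``continue forever with the mix $\alpha_{-i}$.'' The bound $p(\widehat x)<\ep^2$ forces the expected number of non-absorbed stages to exceed $1/\ep^2$, so enough continue samples accumulate to make the test reliable.

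Any unilateral deviation of player $i$ I would decompose into a quitting-probability component and a conditional continue-mixture component. The first is covered by the $0$-equilibrium property of $\widehat x$ in $\Gamma^{\alpha,q}$: a strategy that keeps the conditional continue mix equal to $\alpha_i$ induces in $\Gamma$ the same distribution over absorbing events and absorbing payoffs as the corresponding deviation in $\Gamma^{\alpha,q}$, and therefore yields no gain. For the second, departures from $\alpha_i$ above the test threshold are caught before a significant probability of absorption has accumulated and are followed by the punishment phase, while departures below the threshold change the expected absorbing payoff linearly in $\|\alpha_i'-\alpha_i\|_1$ and hence by at most $O(\ep)$.

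The hard part will be the joint calibration of the test so that on-path false alarms are negligible, deviations whose gain exceeds $\ep$ are detected in time, and the post-detection payoff to player $i$ falls below $\gamma_i(x)$ up to $\ep$-slack. The positive and recursive hypotheses enter at this last point: under the punishment ``continue forever with mix $\alpha_{-i}$,'' player $i$'s best reply yields either $0$ (by continuing forever) or $u_i(Q_i,\vec\alpha_{-i})$ (by quitting at some stage), and the $0$-equilibrium condition applied to the deviation $\widehat x_i'=1$ in $\Gamma^{\alpha,q}$, together with positivity, shows that $\gamma_i(x)$ majorizes both of these values up to an $O(\ep^2)$ error coming from $\widehat x_{-i}$ having size at most $p(\widehat x)<\ep^2$.
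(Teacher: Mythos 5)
Your proposal is correct and follows essentially the same route as the paper: transfer $\widehat x$ to $\Gamma$ via Eq.~\eqref{equ:x}, use the $0$-equilibrium property of $\widehat x$ in $\Gamma^{\alpha,q}$ to rule out profitable changes to the quitting probability (the paper's Eq.~\eqref{equ:45}), and supplement $x$ with statistical tests and punishment to rule out profitable changes to the conditional continue mix, with $p(\widehat x)<\ep^2$ supplying the order-$1/\ep^2$ sample size that makes the test work. The only cosmetic difference is that you make the punishment explicit (``continue forever with $\alpha_{-i}$'') and verify directly that $\gamma_i(x)$ dominates the resulting best-reply value up to $O(\ep^2)$, whereas the paper invokes punishment at player~$i$'s min-max level and appeals to positivity and recursivity for the same conclusion; both work equally well here.
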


\begin{proof}
Consider the stationary strategy profile $x = (x_i)_{i \in I}$ in $\Gamma$ that is defined by
\[ x_i := x_i(\widehat x_i,\alpha_i). \]
Fix now a player $i \in I$.
Since $\widehat x$ is a 0-equilibrium in $\Gamma^{\alpha,q}$ it follows that
\begin{equation}
\label{equ:45}
\gamma_i(x) = \gamma^{\alpha,q}_i(\widehat x) \geq \gamma^{\alpha,q}_i(Q_i,\widehat x_{-i}) = \gamma_i(Q_i,x_{-i}).
\end{equation}
Moreover, if $\widehat x_i(Q_i) > 0$ then there is an equality in Eq.~\eqref{equ:45}.
It follows that player~$i$ cannot profit by changing the probability by which he plays the action $Q_i$.
Thus, the only profitable deviations of player~$i$ may be to change the frequency in which he plays his continue actions.
Since $p(\widehat x) < \ep^2$,
the probability that the game is absorbed in a stage in which player~$i$  plays a continue action is bounded by $\ep^2$.
Therefore,
as in the proof of Lemma~\ref{lemma:relation:1},
provided $\ep$ is sufficiently small,
every set of $|I|-1$ players can perform a statistical test that checks whether
the $|I|$'th player plays his continue actions with the frequency indicated by $x$,
and if not, punish him at his min-max level.

It follows that the stationary strategy profile $x$,
supplemented with statistical tests and threat of punishment,
is an $\ep$-equilibrium in $\Gamma$, provided $\ep$ is sufficiently small.
\end{proof}

\bigskip

To complete the proof of Theorem~\ref{theorem:main} we will consider from no on only positive recursive general quitting games
that do not satisfy the conditions of Lemmas~\ref{lemma:relation:1} and~\ref{lemma:relation:2}.
In Section~\ref{section:sketch} we provide a proof for the special case in which Player~1 has two continue actions
and each other player has a single continue action.
This case exhibits some important aspects of the proof of the general case,
and will help us explain the need for the new tools that we develop in the sequel.
The general case is proven in Section~\ref{section:proof}.

\section{The Proof for the Case $|A_1^c|=2$ and $|A_i^c|=1$ for Every $i \neq 1$}
\label{section:sketch}

In this section we prove Theorem~\ref{theorem:main} when Player~1 has two continue actions
while all other players have a single continue action.
In this case the set of mixed continue action profiles $\times_{i \in I} \Delta(A_i^c)$ is equivalent to the unit interval.
We will therefore describe a mixed continue action profile by a number $\alpha_1 \in [0,1]$ instead of $\alpha = (\alpha_i)_{i \in I}$,
with the interpretation that $\alpha_1$ is the probability that Player~1 assigns to the continue action $C_1^1$
(and $1-\alpha_1$ is the probability that he assigns to the action $C_1^2$).

One interesting aspect of the case $|A_1^c|=2$ and $|A_i^c|=1$ for every $i \neq 1$
is that it uses Browder's Theorem, which we present now,
instead of a more sophisticated fixed point theorem
that we will need for the general case.
The authors are not aware of another application of Browder's Theorem in dynamic games.

\begin{theorem}[Browder, 1960]
Let $X \subseteq \dR^n$ be a convex and open set,
let $K \subseteq X$ be convex and compact,
and let $F : [0,1] \times X \to K$ be a continuous function.
Let $C_F := \{ (t,x) \in [0,1] \times X \colon x = f(t,x)\}$ be the set of fixed points of $f$.
There is a connected component $T$ of $C_F$ such that $T \cap (\{0\} \times X) \neq \emptyset$
and $T \cap (\{1\} \times X) \neq \emptyset$.
\end{theorem}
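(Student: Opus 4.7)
The plan is to argue by contradiction via the Brouwer degree. First observe that $C_F$ is compact: it is closed in $[0,1]\times X$ by continuity of $F$, and every $(t,x) \in C_F$ satisfies $x = F(t,x) \in K$, so $C_F \subseteq [0,1]\times K$. Suppose, for contradiction, that no connected component of $C_F$ meets both $\{0\}\times X$ and $\{1\}\times X$. The compact slices $F_0 := C_F \cap (\{0\}\times X)$ and $F_1 := C_F \cap (\{1\}\times X)$ are nonempty, by Brouwer's theorem applied to the restriction $F(t,\cdot)|_K : K \to K$ for $t=0$ and $t=1$. Appealing to the standard topological separation lemma that, in a compact Hausdorff space, two disjoint closed sets not joined by any connected component admit a clopen partition, I obtain compact sets $A_0, A_1$ with $C_F = A_0 \sqcup A_1$, $F_0 \subseteq A_0$, and $F_1 \subseteq A_1$.

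Next I would construct a bounded open set $U \subseteq [0,1]\times X$ with $A_0 \subseteq U$, $\overline U \cap A_1 = \emptyset$, and, crucially, $\partial U \cap C_F = \emptyset$. Pick a Urysohn function $\phi : [0,1]\times X \to [0,1]$ with $\phi \equiv 0$ on $A_0$ and $\phi \equiv 1$ on $A_1$, and an open ball $B \subseteq X$ large enough that $K \subseteq B$. Setting $U := \{(t,x) \in [0,1]\times B : \phi(t,x) < 1/2\}$, one has $\partial U \subseteq \{\phi = 1/2\} \cup ([0,1] \times \partial B)$, while $\phi(C_F) \subseteq \{0,1\}$ and $C_F \subseteq [0,1]\times B$, so the boundary misses $C_F$. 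Defining $g_t(x) := x - F(t,x)$ and $U_t := \{x \in X : (t,x) \in U\}$, the condition $\partial U \cap C_F = \emptyset$ forces $g_t(x) \neq 0$ on the slice boundary $\partial U_t$, so the Brouwer degree $\deg(g_t, U_t, 0)$ is defined, and by homotopy invariance applied to the varying domain $U_t$ it is constant in $t$. At $t=0$, $U_0$ contains every fixed point of $F(0,\cdot)$ in $B$, so by excision $\deg(g_0, U_0, 0) = \deg(g_0, B, 0) = 1$, the final equality following from a straight-line homotopy of $F(0,\cdot)|_B$ to a constant map into $K$ (which stays zero-free on $\partial B$ by convexity of $B$ and $K \subseteq B$). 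At $t=1$, $U_1$ contains no fixed point, so $\deg(g_1, U_1, 0) = 0$. The resulting contradiction proves the theorem.

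The step I expect to be most delicate is the construction of $U$ with $\partial U \cap C_F = \emptyset$: while the clopen separation of $C_F$ is classical, arranging the desired boundary behavior in the product space (whose boundary contains both vertical and horizontal pieces) is what forces the Urysohn--sublevel-set trick above, combined with the observation that $\phi(C_F)$ is a two-point set. Once $U$ is in hand, the appeals to homotopy invariance and excision of the Brouwer degree are standard, but must be applied with care to the varying slice $U_t$, for which one typically invokes a generalized homotopy-invariance statement for degrees with moving domains.
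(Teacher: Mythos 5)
The paper states this result as a citation to Browder (1960) and does not supply a proof, so there is no in-paper argument to compare yours against. Your degree-theoretic (Leray--Schauder continuation) proof is the standard modern approach and is structurally sound: compactness of $C_F$ inside $[0,1]\times K$; nonemptiness of the endpoint slices $F_0,F_1$ via Brouwer; the quasicomponent-equals-component fact in compact Hausdorff spaces to get the clopen partition $C_F=A_0\sqcup A_1$; the Urysohn sublevel-set construction of $U$ with $\partial U\cap C_F=\emptyset$, exploiting that $\phi(C_F)\subseteq\{0,1\}$ and $C_F\subseteq[0,1]\times K$; generalized homotopy invariance of the degree with moving domains; and the contradiction $1=\deg(g_0,U_0,0)=\deg(g_1,U_1,0)=0$.

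One inaccuracy worth correcting: you cannot in general choose $B$ to be an open \emph{ball}. If $X$ is a thin convex open set (say, a long narrow open rectangle in the plane) and $K$ is a long compact segment inside it, no Euclidean ball both contains $K$ and fits inside $X$. What the argument actually needs is a bounded convex open set $B$ with $K\subseteq B$ and $\overline B\subseteq X$. Such a $B$ always exists: let $\delta:=\dist(K,\dR^n\setminus X)>0$ (positive because $K$ is compact and $X$ is open) and set $B:=\{y\in\dR^n:\dist(y,K)<\delta/2\}$, which is convex as the Minkowski sum of the convex set $K$ with an open ball, is bounded, and has closure inside $X$. With this replacement the rest of your argument is unchanged: the straight-line homotopy from $F(0,\cdot)|_B$ to a constant in $K$ remains zero-free on $\partial B$ because it lands in the convex set $K\subseteq B$, and $\partial B$ is disjoint from $K$ since $B$ is open. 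Everything else checks out.
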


By Brouwer's Fixed Point Theorem, every continuous function $F : X \to K$
has at least one fixed point.
Browder's Theorem states that when the function $F$ depends continuously on a one-dimensional parameter whose range%
\footnote{As John Levy pointed out to the authors, when $F$ is semialgebraic, Browder's Theorem extends to multi-dimensional compact and convex parameter sets.
\label{footnote:levy}}
is $[0,1]$,
the set of fixed points, as a function of the parameter, has a connected component whose projection to the set of parameters is $[0,1]$.

We will show that, for a given $\delta > 0$, the game $\Gamma$ admits a sunspot $\delta$-equilibrium.
To this end we will assume throughout that
Condition~(A.2) in Theorem~\ref{theorem:21} holds and that
the condition of Lemma~\ref{lemma:relation:2} does not hold.

Condition~(A.2) of Theorem~\ref{theorem:21} says%
\footnote{In fact, Theorem~\ref{theorem:21} implies that the condition holds for every $\alpha_1 \in [0,1]$ and every $q^* \in \dR^I$.}
that there exist $\alpha^*_1 \in \Delta(\{C_1^1,C_1^2\})$, $q^* \in \dR^I$,
and $\eta > 0$ such that for every function $\lambda \mapsto x_{\lambda}$
that maps every $\lambda \in (0,1]$ to a $\lambda$-discounted stationary equilibrium $x_\lambda$ in $\Gamma^{\alpha_1^*,q^*}$
such that $\lim_{\lambda \to 0} x_\lambda$ exists, we have
$\lim_{\lambda \to 0} p(x_\lambda) \geq \eta$.

\bigskip
\noindent\textbf{Step 1:} Applying Browder's Theorem.

For every $\lambda \in (0,1]$, every $\alpha_1 \in [0,1]$,
denote by $M_{\lambda}(\alpha_1) \subseteq [0,1]^I$ the set of $\lambda$-discounted stationary equilibria of the game $\Gamma^{\alpha_1,q^*}$:
\[ M_{\lambda}(\alpha_1) :=
\left\{ \widehat x \in [0,1]^I \colon \widehat x \hbox{ is a } \lambda\hbox{-discounted stationary equilibrium of } \Gamma^{\alpha_1,q^*}\right\}. \]
Denote by $M_\lambda \subseteq [0,1] \times [0,1]^I$ the graph of the function $M_\lambda(\cdot)$.

Browder's Theorem implies%
\footnote{To apply Browder's Theorem we need to show that the set $M_\lambda$ is the set of fixed points
of some continuous function $f : [0,1] \times [0,1]^I \to [0,1]^I$.
Such a function can be constructed
using the function devised in Nash (1950) to prove the existence of equilibrium in strategic form games,
by observing that a $\lambda$-discounted stationary equilibrium is a fixed point of the Shapley operator; see Fink (1964).
Browder's Theorem is applied to $K = [0,1]^I$ and $X = [-\ep,\ep]^I$.}
that there exists a connected component of $M_\lambda$ that intersects both $\{0\} \times [0,1]^I$ and $\{1\} \times [0,1]^I$.
Because the set $M_\lambda$ is semialgebraic, this in turn implies that there is a continuous path in $M_\lambda$ that
intersects both $\{0\} \times [0,1]^I$ and $\{1\} \times [0,1]^I$.

\bigskip
\noindent\textbf{Step 2:} Constructing a continuous path of equilibria.

Denote by $M := \limsup_{\lambda \to 0} M_\lambda \subseteq [0,1] \times [0,1]^I$ the set of all accumulation points of sequences in $M_\lambda$ as $\lambda$ goes to 0;
that is, $M$ is the set of all limits $\lim_{k \to \infty} (\alpha_1^k,\widehat x^{k}(\alpha_1^k))$,
where $\alpha_1^k \in [0,1]$, $\lambda^k \in (0,1]$, and $\widehat x^{k}(\alpha_1^k) \in M_{\lambda^k}(\alpha_1^k)$ for every $k \in \dN$,
such that $\lim_{k \to \infty} \lambda^k = 0$ and
the two limits $\lim_{k \to \infty} \alpha_1^k$
and $\lim_{k \to \infty} \widehat x^{k}(\alpha_1^k)$ exist.
The set $M$ is closed and semialgebraic.
Moreover,
since for every $\lambda > 0$ there is a continuous path in $M_\lambda$ that intersects $\{0\} \times [0,1]^I$ and $\{1\} \times [0,1]^I$,
it follows that there is a continuous path in $M$ that intersects $\{0\} \times [0,1]^I$ and $\{1\} \times [0,1]^I$.
This implies that there is a continuous function $\alpha_1 : [0,1] \to [0,1]$ satisfying $\alpha_1(0) = 0$ and $\alpha_1(1)=1$,
and a continuous function $\widehat x : [0,1] \to [0,1]^I$,
such that $\widehat x(s) \in M(\alpha_1(s))$, for every $s \in [0,1]$.
Let $s_0 \in [0,1]$ satisfy $\alpha_1(s_0) = \alpha_1^*$.
By the choice of $\alpha^*_1$ we have $p(\widehat x(s_0)) > 0$.

\bigskip
\noindent\textbf{Step 3:} There is $\eta' > 0$ such that $p(\widehat x(s)) \geq \eta'$ for every $s \in [0,1]$.

Assume to the contrary that the claim does not hold.
Since the function $\widehat x$ is continuous,
for every $\eta' > 0$ sufficiently small there is $s \in [0,1]$ such that $p(\widehat x(s)) = \eta'$.
By definition, $(\alpha_1(s),\widehat x(s))$ is the limit of a sequence $(\alpha_1^{[k]}(s),\widehat x^{[k]}(s))_{k \in \dN}$,
where $\widehat x^{[k]}(s)$ is a stationary $\lambda^{[k]}$-discounted equilibrium in the auxiliary game $\Gamma^{\alpha_1^{[k]}(s),q^*}$ for every $k \in \dN$
such that $\lim_{k \to \infty}\lambda^{[k]} = 0$.
By Lemma~\ref{lemma:discounted}, $\widehat x(s)$ is a 0-equilibrium in the auxiliary game $\Gamma^{\alpha_1(s),q^*}$,
and therefore the condition of Lemma~\ref{lemma:relation:2} holds, a contradiction.

\bigskip
We next show that for every $s \in [0,1]$, only Player~1 may have a profitable deviation from $x(s)$ in $\Gamma$.
We will then show that there is $s_0 \in [0,1]$ such that $x(s)$ is a stationary equilibrium of the original game $\Gamma$.

\bigskip
\noindent\textbf{Step 4:} For every $s \in [0,1]$, when the players follow the stationary strategy profile $x(s)$
no player $i\neq 1$ can profit by deviating from $x_i(s)$.
Moreover, Player~1 cannot profit by deviating to $Q_1$.

By Step~3 and Lemma~\ref{lemma:discounted},
the strategy profile $x(s)$ is a stationary 0-equilibrium in the auxiliary game $\Gamma^{\alpha_1(s),q^*}$.
In particular, any deviation in $\Gamma$ that is possible in the auxiliary game $\Gamma^{\alpha_1(s),q^*}$
is not profitable in the original game $\Gamma$, and the claim follows.

\bigskip
\noindent\textbf{Step 5:} The case that there is $s_0 \in [0,1]$ such that $\sum_{i \neq 1} \widehat x_i(s_0) = 0$.

Since $\sum_{i \neq 1} \widehat x_i(s_0) = 0$,
it follows that $\widehat x_1(s_0) > 0$.
Since the game is recursive and positive, and all players except Player~1 continue,
Player~1 cannot profit by deviating from
$x_1(s_0)$ in the game $\Gamma$.

\bigskip
\noindent\textbf{Step 6:} The case that $\sum_{i \neq 1} \widehat x_i(s) > 0$ for every $s \in [0,1]$.

For every absorbing mixed action profile $x \in X = \times_{i \in I}\Delta(A_i)$ denote by $u(x)$ the \emph{expected absorbing payoff} under $x$:
\[ u(x) := \frac{\sum_{a \in A} \left(\prod_{i \in I} x_i(a_i)\right) p(a)u(a)}{p(x)}. \]
Let $x_i(s) := x_i(\widehat x_i(s),\alpha_i(s))$, see Eq.~\eqref{equ:x}.
Denote by $u_1^1(s) := u_1(C^1_1,x_{-1}(s))$ (resp.~$u_1^2(s) := u_1(C^2_1,x_{-1}(s))$)
the payoff of Player~1 if he plays the stationary strategy $C^1_1$ (resp.~$C_1^2$)
while all other players follow the stationary strategy profile $x(s)$.
These quantities are well defined because $\sum_{i \neq 1} \widehat x_i(s) > 0$ for every $s \in [0,1]$.
Since the function $s \mapsto \widehat x(s)$ is continuous, the functions
$s \mapsto u_1^1(s)$ and
$s \mapsto u_1^2(s)$ are continuous as well.

By definition, for $s=1$, the strategy $x_1(1)$ assigns probability 0 to the action $C^2_1$.
Similarly, for $s=0$, the strategy $x_1(0)$ assigns probability 0 to the action $C^1_1$.
Consequently, if $u^1_1(1) \geq u_1^2(1)$,
then Player~1 cannot profit by deviating from $x_1(1)$ to $C_1^2$,
and therefore the stationary strategy profile $x(1)$ is a 0-equilibrium in $\Gamma$.
Similarly, if $u^1_1(0) \leq u_1^2(0)$,
then Player~1 cannot profit by deviating from $x_1(0)$ to $C_1^1$,
and therefore the stationary strategy profile $x(0)$ is a 0-equilibrium in $\Gamma$.

It is left to consider the case $u^1_1(1) < u_1^2(1)$ and $u^1_1(0) > u_1^2(0)$.
The continuity of the functions $u^1_1$ and $u^2_1$ implies that there is $s_0 \in (0,1)$
such that $u^1_1(s_0) = u^2_1(s_0)$.
But then both $C_1^1$ and $C_1^2$ yield the same payoff against $x_{-1}(s_0)$,
and therefore Player~1 cannot profit by deviating from $x_1(s_0)$ to either $C_1^1$ or $C_1^2$.
In particular, the stationary strategy profile $x(s_0)$ is a 0-equilibrium in $\Gamma$.

\bigskip

We now discuss the adaptation of the proof to the general case.
As above, the challenging case is when
there exist $\alpha^* \in \times_{i \in I} \Delta (A_i^c)$ and $q^*\in \dR^I$ such that the auxiliary game $\Gamma^{\alpha^*,q^*}$
satisfies Condition~(A.2) in Theorem~\ref{theorem:21}.
For every $\alpha \in \times_{i \in I} \Delta (A_i^c)$ denote by
$M_\lambda(\alpha) \subseteq \times_{i \in I} \Delta (A_i^c)$ the set of all $\lambda$-discounted equilibria of the game
$\Gamma^{\alpha,q^*}$,
by $M_\lambda \subseteq [0,1] \times \left(\times_{i \in I} \Delta (A_i^c)\right)$
the graph of the function $\alpha \mapsto M_\lambda(\alpha)$,
and by $M := \limsup_{\lambda \to 0} M_\lambda\subseteq [0,1] \times \left(\times_{i \in I} \Delta (A_i^c)\right)$
the set of accumulation points of the sets $(M_\lambda)_{\lambda > 0}$
as $\lambda$ goes to 0.

By Browder's Theorem one can prove that the set $M$ has a connected component, whose boundary,
when projected to $\times_{i \in I}\Delta(A_i^c)$,
coincides with the boundary of $\times_{i \in I}\Delta(A_i^c)$ (recall Footnote~\ref{footnote:levy}).
In the proof above,
to show that a stationary equilibrium exists we used in Step~6 the Mean Value Theorem.
In the general case we need to use a fixed point theorem applied to the set $M$.
In Section~\ref{section:topology} we will develop such a theorem.
Our proof utilizes the theory of intersection index, which requires $M$ to be a smooth manifold.
By Kohlberg and Mertens (1986), given the set of players and the sets of actions of the players of a strategic form game,
the equilibrium set is homeomorphic to the set of games, which is a Euclidean space.
This set, however, is not a smooth manifold.
In Section~\ref{section:equilibrium} we will prove that the equilibrium set can be uniformly approximated by smooth manifolds,
a property that will suffice for our purposes.

\section{Topological Foundations}
\label{section:topology}

In this section we present the results from topology that we need in the paper.
We refer to Guillemin and Pollack (2010) for the relevant background on manifolds,
including the definition of transversality, oriented manifolds, and the intersection index.
One should bear in mind that Guillemin and Pollack (2010) often consider the case of closed manifolds without boundary,
while in our case some manifolds have boundary.
Nevertheless, our assumptions will ensure that the results still hold, with the same proofs,
when the manifolds have boundary.

All manifolds in this paper are oriented.
In this section we use simplexes and products of simplexes,
which are not smooth manifolds in the usual definition, since their boundary is not a manifold.
One way to handle such manifolds is as manifolds with corners, see, e.g., Joyce (2010).
This issue will not arise in our results; the only place where we do care about the boundary being a manifold is in Theorem~\ref{theorem:6},
and there we will deal with it specifically.

Given $\ep > 0$ and a function $f : X \to Y$, where $Y$ is a metric space with metric $\rho_Y$,
the function $f_\ep : X \to Y$ is an \emph{$\ep$-perturbation} of $f$
if $\rho_Y(f(x),f_\ep(x)) < \ep$ for every $x \in X$.
The basic result in topology that we need is a variation of Browder's Theorem.

\begin{theorem}
\label{theorem:5}
Let $X$ be a compact $k$-dimensional manifold with boundary.
Let $U$ be an $n$-dimensional connected open boundaryless manifold.
Let $M \subseteq U \times X$ be an $n$-dimensional boundaryless manifold
that satisfies $M \cap (U \times \partial X) = \emptyset$.
Let $N$ be an $l$-dimensional compact manifold with boundary.
Let $y : N \times X \to U$ be a continuous function such that for every $\alpha \in N$ the function
$y(\alpha,\cdot) : X \to U$ is homotopic to a constant function.
Consider the function $\widetilde y : N \times X \to U \times X$ defined by
\[ \widetilde y(\alpha,x) = (y(\alpha,x),x). \]
Let $\pi : U \times X \to U$ be the projection and denote $d := \deg(\pi_{|M})$.

Then for every $\ep > 0$ there is an $\ep$-perturbation $\widetilde y_\ep$ of $\widetilde y$
such that
\begin{itemize}
\item[(a)] $\widetilde y_\ep$ is transversal to $M$,
and
\item[(b)] the manifold $M' := (\widetilde y_\ep)^{-1}(M) \subseteq N \times X$ satisfies that its boundary is contained in $\partial N \times X$.
Moreover, the projection $M' \to N$ has degree $d$.
\end{itemize}
\end{theorem}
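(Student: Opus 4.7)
The plan is to carry out a standard transversality-and-degree computation, exploiting the product form $\widetilde y(\alpha,x)=(y(\alpha,x),x)$. First I would seek a perturbation that preserves the second coordinate, setting $\widetilde y_\ep(\alpha,x)=(y_\ep(\alpha,x),x)$ with $y_\ep$ an $\ep$-perturbation of $y$. Applying the parametric transversality theorem (as in Guillemin and Pollack) to the auxiliary family $(\alpha,x,s)\mapsto(y(\alpha,x)+s,x)$, which is a submersion in $s$ in any local chart of $U$, yields a residual set of parameters $s$ arbitrarily close to $0$ for which both $\widetilde y_\ep\pitchfork M$ and $\widetilde y_\ep|_{\partial(N\times X)}\pitchfork M$. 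Because $\widetilde y_\ep$ preserves the second coordinate, any $x\in\partial X$ is sent into $U\times\partial X$, which by hypothesis is disjoint from $M$; hence $M':=\widetilde y_\ep^{-1}(M)$ automatically satisfies $M'\cap(N\times\partial X)=\emptyset$, and no smallness condition on $\ep$ is needed for this step.

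By the preimage theorem for manifolds with boundary, $M'$ is then a smooth submanifold of $N\times X$ of codimension $k$, hence of dimension $l+k-k=l$, matching $\dim N$. Its boundary sits in $\partial(N\times X)=(\partial N\times X)\cup(N\times\partial X)$, and the previous paragraph rules out the second piece, yielding $\partial M'\subseteq\partial N\times X$, which is part~(b). For the degree claim, let $p:M'\to N$ denote the projection and pick a regular value $\alpha\in N$. The signed count $\deg(p)$ equals the intersection index in $U\times X$ of the slice map $\phi_\alpha:X\to U\times X$, $x\mapsto(y_\ep(\alpha,x),x)$, with $M$. Since $y(\alpha,\cdot)$ is homotopic to a constant $c=c(\alpha)\in U$ and $y_\ep$ is $\ep$-close to $y$, $y_\ep(\alpha,\cdot)$ is itself homotopic to $c$, so $\phi_\alpha$ is homotopic to the slice inclusion $\psi_\alpha:x\mapsto(c,x)$. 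After a small transverse perturbation of this homotopy, homotopy invariance of the intersection index gives the same signed count for $\phi_\alpha$ and for $\psi_\alpha$; the latter count is, by the very definition of $\deg(\pi_{|M})$, equal to $d$. Hence $\deg(p)=d$.

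The step I expect to demand the most care is the orientation bookkeeping that identifies the intersection index of $\psi_\alpha$ with $M$ with $\deg(\pi_{|M})$: the canonical splitting $T(U\times X)=TU\oplus TX$ must be used consistently to orient both the normal bundle of $M$ inside $U\times X$ and the source $X$ of $\psi_\alpha$, so that the two signed counts actually agree in sign, not merely in absolute value. A secondary technical point is that the homotopy from $\phi_\alpha$ to $\psi_\alpha$ can be taken with its second coordinate fixed equal to $x$, which keeps $\partial X$ inside the $M$-free region $U\times\partial X$ throughout, so no spurious intersections with $M$ appear at the boundary during the homotopy. With these two points accounted for, the remaining ingredients are routine manifold theory.
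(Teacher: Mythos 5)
Your argument is correct and follows the same overall plan as the paper's proof: perturb $\widetilde y$ to make it transversal to $M$, use the homotopy hypothesis on $y(\alpha,\cdot)$ to compute the intersection index of the slice map with $M$, and identify that intersection index with the degree of the projection $M' \to N$. The paper isolates the last step in two auxiliary lemmas (Lemma~\ref{lemma:new1} and Lemma~\ref{lemma:new2}) handling, respectively, the functoriality of the intersection index under composition and the identification of the degree of a restricted map with an intersection index; you fold these into one direct orientation-bookkeeping argument and correctly flag it as the most delicate point. One genuine refinement in your version: by insisting on a perturbation of the form $(\alpha,x)\mapsto(y_\ep(\alpha,x),x)$ that preserves the $X$-coordinate, you make $M'\cap(N\times\partial X)=\emptyset$ automatic and independent of the size of $\ep$, whereas the paper allows a general perturbation and therefore invokes compactness of $X$ and smallness of the homotopy to keep the perturbed image of $N\times\partial X$ away from $M$; your route is slightly cleaner there. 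Two minor points you could tighten: the translations $y+s$ make literal sense only in a chart or after embedding $U$ in Euclidean space and composing with a tubular-neighborhood retraction (harmless in general, and in the intended application $U$ is already Euclidean), and since $y$ is only assumed continuous, a preliminary smoothing step is needed before any transversality argument applies --- an omission the paper shares.
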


To allow game theorists to properly interpret the data of Theorem~\ref{theorem:5}, we explain its relation to games.
Suppose that the set $I$ of players and the action sets of the players $(A_i)_{i \in I}$ are fixed.
The compact manifold with boundary $X$ will be the set of mixed action profiles in binary games,%
\footnote{A \emph{binary} game is a strategic-form game in which every player has two actions.}
namely, $X = [0,1]^I$.
The connected open boundaryless manifold $U$ will be the set of possible payoff functions in binary games, namely, $U = \dR^{2^{|I|} \times |I|}$.
The manifold $M \subseteq U \times X$ will be a smooth manifold that uniformly approximates the equilibrium set.
Let $N$ be some parameter space, which is a compact manifold with boundary, for example, a finite product of simplexes,
and let $y : N \to U$ be some continuous function that assigns a game to each parameter.
In the statement of Theorem~\ref{theorem:5}, the domain of the function~$y$ is not $N$ but $N \times X$, but to understand the theorem we ignore this point.
Extend $y$ to a function $\widetilde y : N \times X \to U \times X$ by setting $\widetilde y(\alpha,x) = (y(\alpha),x)$.
Theorem~\ref{theorem:5} roughly states that $\widetilde y^{-1}(M)$ is a manifold,
and that its boundary, when projected to $N$, contains the boundary of the parameter set $N$.
In other words, it roughly says that the equilibrium set restricted to games in the range of $y$
is a manifold whose boundary covers the boundary of the parameter set.

To prove Theorem~\ref{theorem:5} we will need a couple of observations,
which follow from the definition of the intersection index.
\begin{lemma}
\label{lemma:new1}
Let $X$, $Y$, and $Z$ be three manifolds with boundary,
let $f : X \to Y$ and $g : Y \to Z$ be smooth functions,
and let $M \subseteq Z$ be a boundaryless manifold (see Figure~\arabic{figurecounter}).
Assume that
\begin{itemize}
\item   $X$ is compact,
\item   $g$ is transversal to $M$,
\item   $M$ is disjoint of $g \circ f(\partial X)$, and
\item   $\dim M + \dim X = \dim Z$.
\end{itemize}
Then the intersection index of $g\circ f$ and $M$
is equal to the intersection index of $f$ and $g^{-1}(M)$.
\end{lemma}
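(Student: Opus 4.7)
The plan is to perturb $f$ slightly (staying away from $\partial X$, which is safe by the third hypothesis) so that $f$ becomes transversal to the submanifold $W := g^{-1}(M) \subseteq Y$. Since $g$ is transversal to $M$, the set $W$ is a boundaryless submanifold of $Y$ of codimension $\dim Z - \dim M = \dim X$, so its dimension is $\dim Y - \dim X$, which makes the intersection index of $f$ with $W$ well defined. Moreover, because $g$ is transversal to $M$, transversality of $f$ to $W$ is equivalent to transversality of $g \circ f$ to $M$; thus a single generic perturbation of $f$ puts both intersections in transverse position simultaneously, and the hypothesis $M \cap g\circ f(\partial X) = \emptyset$ plus compactness of $X$ lets us choose the perturbation small enough to preserve this disjointness. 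Since the intersection index is invariant under such perturbations, it suffices to prove the identity in the transversal case.

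In the transversal case, the point sets agree on the nose: $(g \circ f)^{-1}(M) = f^{-1}(W)$, and by the dimension count both sets are finite. Fix a point $p$ in this common intersection, and set $q := f(p) \in W$ and $r := g(q) \in M$. The first step is to record two decompositions: by transversality of $f$ to $W$,
\[ T_qY = df_p(T_pX) \oplus T_qW, \]
and by transversality of $g$ to $M$,
\[ T_rZ = dg_q(T_qY) + T_rM, \qquad T_qW = (dg_q)^{-1}(T_rM). \]
Combining these, $dg_q$ maps $T_qW$ into $T_rM$ and descends to an isomorphism $T_qY/T_qW \to T_rZ/T_rM$, and by the standard convention for orienting preimages (Guillemin and Pollack) this isomorphism is orientation-preserving.

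The second step is to chase orientations. The local sign of the intersection $(f, W)$ at $p$ is $+1$ precisely when the direct-sum orientation on $df_p(T_pX) \oplus T_qW$ agrees with the given orientation of $T_qY$. Applying the orientation-preserving isomorphism $dg_q$ and using $dg_q \circ df_p = d(g\circ f)_p$, this sign equals the sign of the direct-sum orientation on $d(g\circ f)_p(T_pX) \oplus T_rM$ compared with the orientation of $T_rZ$, which is exactly the local sign of the intersection $(g\circ f, M)$ at $p$. Summing over the finitely many points $p$ yields the desired equality of intersection indices.

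The main obstacle I expect is the orientation bookkeeping in the second step: the identification $T_qY/T_qW \cong T_rZ/T_rM$ has to be set up with the exact sign convention that Guillemin and Pollack use for orienting preimages of transverse maps, so that the comparison of the two direct-sum orientations comes out with no hidden factor of $(-1)^{\text{codim}}$. The perturbation step is routine (genericity plus compactness), and the agreement of the two preimage sets is tautological; the real work is verifying that these conventions compose correctly, which is exactly why the compatibility $\dim M + \dim X = \dim Z$ is built into the hypothesis.
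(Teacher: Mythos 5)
Your proposal is correct and follows essentially the same route as the paper's proof: perturb $f$ to make it transversal to $g^{-1}(M)$ (using compactness of $X$ and disjointness of $g\circ f(\partial X)$ from $M$ to keep the intersection index invariant under the small homotopy), observe that $(g\circ f)^{-1}(M) = f^{-1}(g^{-1}(M))$, and then compare signs. The only difference is one of exposition: where the paper compresses the sign comparison into the single remark that ``the inverse image of a manifold is functorial,'' you unfold it into an explicit local orientation chase via the isomorphism $T_qY/T_qW \cong T_rZ/T_rM$ induced by $dg_q$ --- which is precisely what that phrase is shorthand for, and is a useful thing to have written out.
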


\begin{equation*}
\xymatrix{
X \ar[r]^f & Y\ar[r]^g & Z \\
&& M\ar@{^{(}->}[u]}
\end{equation*}

\centerline{Figure \arabic{figurecounter}: The data of Lemma \ref{lemma:new1}.}
\addtocounter{figurecounter}{1}

\bigskip

\begin{proof}
For every $\ep > 0$ there is a smooth $\ep$-perturbation $f_\ep : X \to Y$ of $f$ such that
\begin{itemize}
\item
$f_\ep$ is transversal to $g^{-1}(M)$,
\item
$f_\ep$ is homotopic to $f$ through the homotopy function $H : [0,1] \times X \to Y$
with the condition that every points moves at most $\ep$ along the homotopy;
that is, $\rho_Y(H(0,x),H(t,x)) < \ep$ for every $t \in [0,1]$ and every $x \in X$,
where $\rho_Y$ is the metric on $Y$, and
\item
$H([0,1] \times \partial X) \cap g^{-1}(M) = \emptyset$.
\end{itemize}
Since $H(\{t\}\times \partial X) \cap g^{-1}(M) = \emptyset$ for every $t \in [0,1]$,
and since $X$ is compact,
it follows that the homotopy $H$ preserves the intersection index (see, for example, Guillemin and Pollack (2010, page 108)),
and therefore we may assume that $f$ is transversal to $g^{-1}(M)$.

It follows from the definitions of the intersection index and of the inverse image that the intersection index of $g \circ f$ and $M$
is the sum of the orientations of
$(g\circ f)^{-1}(M)$.
For the same reason, the intersection index of $f$ and $g^{-1}(M)$ is the sum of the orientations of $f^{-1}(g^{-1}(M))$.
Since the inverse image of a manifold is functorial, we get the desired result.
\end{proof}

\bigskip

Let $f : X \to Y$ be a smooth function between manifolds.
A point $y \in Y$ is a \emph{regular value of $f$}
if for every $x \in f^{-1}(y)$ the differential of $f$ at $x$, denoted $df_x$,
is onto the tangent bundle at $y$, denoted $T_y(Y)$.

\begin{lemma}
\label{lemma:new2}
Let $X$, $Y$, and $Z$ be three manifolds with boundary such that $X$ and $Z$ are compact with dimension $n$,
and $Y$ has dimension $n+k$.
Let $g : Y \to Z$ be smooth.
Assume that $X \subseteq Y$ and $\partial X \subseteq \partial Y = g^{-1}(\partial Z)$.
Then the degree of $g$ restricted to $X$
is equal to the intersection index of $X$ and $g^{-1}(Z)$, for some $z \in Z$ which is a regular value of $g$.
\end{lemma}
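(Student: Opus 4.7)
The plan is to reduce this statement to Lemma~\ref{lemma:new1} by taking $f$ to be the inclusion $\iota : X \hookrightarrow Y$ and the submanifold $M$ to be a single point $\{z\} \subseteq Z$. The key observation is that, for a suitable regular value $z$, the intersection index of $g \circ \iota = g|_X$ with the $0$-dimensional manifold $\{z\}$ is by definition the degree of $g|_X$, while the intersection index of the inclusion $\iota$ with $g^{-1}(\{z\}) = g^{-1}(z)$ is exactly the intersection index of $X$ and $g^{-1}(z)$ inside $Y$. Lemma~\ref{lemma:new1} then equates the two.

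The first step is to choose the point $z$. By Sard's theorem applied to both $g$ and to $g|_X$, the set of common regular values is dense in $Z$; since $\partial Z$ is nowhere dense I can pick such a $z$ in the interior of $Z$. For this choice, $g^{-1}(z)$ is a boundaryless $k$-dimensional submanifold of $Y$ (the assumption $\partial Y = g^{-1}(\partial Z)$ ensures that $g^{-1}(z)$ avoids $\partial Y$), and the transversality $X \pitchfork g^{-1}(z)$ in $Y$ is equivalent to $d(g|_X)_p$ being surjective at each intersection point, by the dimension count $n + k = \dim Y$.

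Next, I would verify the four hypotheses of Lemma~\ref{lemma:new1} applied to $X \xrightarrow{\iota} Y \xrightarrow{g} Z$ with $M = \{z\}$: (i) $X$ is compact, as given; (ii) $g$ is transversal to $\{z\}$ because $z$ is a regular value; (iii) $\{z\} \cap g(\iota(\partial X)) = \emptyset$, since $\iota(\partial X) = \partial X \subseteq \partial Y = g^{-1}(\partial Z)$ forces $g(\partial X) \subseteq \partial Z$, which is disjoint from the interior point $z$; (iv) $\dim\{z\} + \dim X = 0 + n = \dim Z$. Lemma~\ref{lemma:new1} then yields the desired identity.

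The main obstacle is the orientation bookkeeping. One must check that, under the conventions of Guillemin and Pollack, the intersection index of $g|_X$ with a regular value $\{z\}$ coincides with the signed count $\deg(g|_X) = \sum_{p \in (g|_X)^{-1}(z)} \mathrm{sign}\, d(g|_X)_p$, and that the induced orientation on $g^{-1}(z)$ entering into the intersection index of $\iota$ with $g^{-1}(z)$ agrees with the orientation implicitly used in the statement of Lemma~\ref{lemma:new2}. Both are routine compatibility statements once one fixes the convention under which $T_p Y = T_p g^{-1}(z) \oplus W$ with $dg_p : W \xrightarrow{\cong} T_z Z$ orientation-preserving, but they should be recorded explicitly to justify the reduction.
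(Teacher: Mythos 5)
Your proposal matches the paper's proof: both apply Lemma~\ref{lemma:new1} with $f$ taken to be the inclusion $X \hookrightarrow Y$ and $M = \{z\}$ a regular value of $g$ supplied by Sard's Lemma, so that the intersection index of $g|_X$ with $\{z\}$ (the degree) equals the intersection index of the inclusion with $g^{-1}(z)$. You also supply details the paper leaves implicit---notably the verification of the disjointness hypothesis of Lemma~\ref{lemma:new1} via $\partial X \subseteq g^{-1}(\partial Z)$ and the choice of $z$ as a common regular value of $g$ and $g|_X$---and you correctly read $f$ as the inclusion $X \hookrightarrow Y$, where the paper's phrase ``the inclusion from $Y$ to $Z$'' is evidently a slip.
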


%??? do we need the next paragraph ???
%
%We denote by $O_p(M)$ the orientation of the manifold $M$ at the point $p$,
%that is, a nowhere vanishing $\bigwedge^{\dim(M)}TM$ section.
%Whenever we have two orientations at $p$, denoted $w_0$ and $w_1$,
%we denote by $\tfrac{w_0}{w_1}$ the sign of the scalar $a$ that satisfies $aw_1 = w_0$.
%
%\bigskip

\begin{proof}%[Proof of Lemma~\ref{lemma:new2}]
We will apply Lemma~\ref{lemma:new1} to $X$, $Y$, $Z$, $g$, $f$ that is the inclusion from $Y$ to $Z$,
and $M$ is a regular value $z$ of $g$ thought of as a 0-dimensional manifold with positive orientation.
By Sard's Lemma such a regular value exists.
By Lemma~\ref{lemma:new1} the intersection index of $g|_X$ and the point $z$, which is the degree of $g$,
is equal to the intersection index of the inclusion and $g^{-1}(z)$, as desired.
\end{proof}

\bigskip

\begin{proof}[Proof of Theorem~\ref{theorem:5}]
We will denote the set $X$ without its boundary by $X^\circ := X \setminus \partial X$.
Fix $\ep > 0$.
By the Transversality Theorem (see, e.g., Guillemin and Pollack (2010, Theorem 70))
there is an $\ep$-perturbation $\widetilde y_\ep$ of $\widetilde y$ that satisfies the following two conditions:
\begin{itemize}
\item   $\widetilde y_\ep$ is transversal to $M$, so that Part~(a) holds, and
\item   $\widetilde y_\ep$ is homotopic to $\widetilde y$ through the homotopy function $H : [0,1] \times N \times X \to X \times U$
with the condition that every points moves at most $\ep$ along the homotopy.
\end{itemize}
Since $X$ is compact, provided $\ep$ is sufficiently small,
along the homotopy $H$ we have $H([0,1] \times N \times \partial X) \cap M = \emptyset$.
This implies that the first claim in Part~(b) holds.
Indeed, since $M$ is boundaryless,
we deduce that $\partial (\widetilde y_\ep)^{-1}(M)$ is contained in
$\partial (N \times X) = (\partial N \times X) \cup (N \times \partial X)$.
Since $\image([0,1] \times N \times \partial X) \cap M = \emptyset$,
we obtain that $\partial (\widetilde y_\ep)^{-1}(M)$ is contained in $\partial N \times X$.

From now on we fix an arbitrary element $\alpha \in N$.
By the construction of $\widetilde y_\ep$,
the function $\widetilde y_\ep(\alpha,\cdot)$ is homotopic to $\widetilde y(\alpha,\cdot)$,
which is homotopic to $\hbox{const} \times Id : X \to U \times X$.
Moreover, by assumption, along the homotopy $\partial X$ is disjoint of $M$.
Since homotopy preserves the intersection index,
the intersection index of $\widetilde y_\ep(\alpha,\cdot)$ and $M$ is $d$.

We will now apply Lemma~\ref{lemma:new2} with the following data:
\begin{equation*}
\begin{array}{lll}
\hbox{Lemma }\ref{lemma:new2} & \ \ \ \ \ & \hbox{Our proof} \\
\hline
X & & M' = (\widetilde y_\ep)^{-1}(M), \\
Y & & N \times X^\circ,\\
Z & & N,\\
g : X \to Y & & \pi : N\times X^\circ \to N.
\end{array}
\end{equation*}
We verify that the conditions of the lemma hold for these data,
and therefore we will deduce that
the degree of the projection from $N \times X$ to $N$, restricted to $M'$,
is equal to the intersection index of $\{\alpha\} \times X^\circ$ and $(\widetilde y_\ep)^{-1}(M)$.

\begin{itemize}
\item   The set $M'$ is the inverse image of a closed set under a smooth function, hence it is closed.
Since $N$ and $X$ are compact, the manifold $M'$ is compact.
\item   By definition, the manifold $N$ is compact.
\item   We argue that $\dim(M') = \dim(N)$.
Indeed,
by definition, $\codim(M') = \dim(N \times X) - \dim(M')$,
and therefore, since $\widetilde y_\ep$ is transversal,
\begin{eqnarray*}
\dim(N \times X) - \dim(M') &=& \codim(M')\\
&=& \codim(M)\\
&=& \dim(U \times X) - \dim(M)\\
&=& n+k-n=k=\dim(X).
\end{eqnarray*}
It follows that $\dim(M') = \dim(N)$, as desired.
\item
We argue that $M' \subseteq N \times X^\circ$.
Indeed, $M' \subseteq N \times X$ and $M \cap (N \times \partial X) = \emptyset$.
\item
We note that $\pi^{-1}(\partial N) = \partial (N \times X^\circ) = \partial N \times X^\circ$.
\end{itemize}

Recall that $\alpha$ is an arbitrary element in $N$.
We now apply Lemma~\ref{lemma:new1} with the following data:
\begin{equation*}
\begin{array}{lll}
\hbox{Lemma }\ref{lemma:new1} & \ \ \ \ \ & \hbox{Our proof} \\
\hline
X & \ \ \ \ \ & \{\alpha\} \times X,\\
Y & & N \times X,\\
Z & & U \times X,\\
f : X \to Y & & \hbox{inclusion} : \{\alpha\} \times X \to N \times X,\\
g : Y \to Z & & \widetilde y_\ep : N \times X \to U \times X,\\
M & & M.
\end{array}
\end{equation*}

We note that since $N \times \partial X$ is disjoint of $M$, we also have that $g \circ f(\partial(\{\alpha\} \times X))$ is disjoint of $M$.
We leave to the reader the verification that the other conditions of Lemma~\ref{lemma:new1} hold.
We deduce from Lemma~\ref{lemma:new1}
that the intersection index of $\{\alpha\} \times X^\circ$ and $(\widetilde y_\ep)^{-1}(M)$
is equal to the intersection index of $\widetilde y_\ep(\alpha,\cdot)$ and $M$,
which is equal to $d$.
The result follows.
\end{proof}

\bigskip

The following result is a fixed point theorem for manifolds,
which is close to a result of Mertens (1989, page 597).

\begin{theorem}
\label{theorem:6}
Let $\Delta$ be a $d$-dimensional convex compact set
and let $M$ be a $d$-dimensional compact manifold with boundary.
Let $g : M \to \Delta$ be a continuous function,
and let $f : M \to \Delta$ be a smooth function that satisfies the following conditions:
\begin{itemize}
\item $\partial M \subseteq f^{-1}(\partial \Delta)$.
\item   The degree of $f$ is not zero: $\deg(f) \neq 0$.
\end{itemize}

Then there is $x \in M$ such that $f(x) = g(x)$.
\end{theorem}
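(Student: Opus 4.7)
\textbf{Proof Plan for Theorem~\ref{theorem:6}.} I argue by contradiction. Suppose $f(x)\neq g(x)$ for every $x\in M$; by compactness of $M$, $\delta := \inf_{x\in M}|f(x)-g(x)| > 0$. Embedding $\Delta$ in $\dR^d$ and smoothly approximating $g$ in $\dR^d$ followed by the nearest-point projection onto the convex set $\Delta$, I may assume $g$ itself is smooth while retaining $f(x)\neq g(x)$ everywhere. For each $x$, since $f(x)$ and $g(x)$ are distinct points of the convex body $\Delta$, the ray from $g(x)$ through $f(x)$ exits $\Delta$ at a unique point
\[
\psi(x) := g(x) + s^{*}(x)\bigl(f(x)-g(x)\bigr), \qquad s^{*}(x) := \max\bigl\{s\geq 0 : g(x)+s(f(x)-g(x)) \in \Delta\bigr\} \geq 1,
\]
and the resulting map $\psi : M \to \partial\Delta$ is continuous.

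The crux is the straight-line homotopy $H(t,x) := (1-t)f(x) + t\psi(x)$, which takes values in $\Delta$ by convexity and which I claim satisfies $H(t,x)\in\partial\Delta$ for every $x\in\partial M$ and every $t\in[0,1]$. Fix $x\in\partial M$, so that $f(x)\in\partial\Delta$ by the hypothesis on $f$, and let $L$ be the line through $g(x)$ and $f(x)$, so that $L\cap\Delta$ is a segment $[a,b]$ with $a,b\in\partial\Delta$. A standard convex-geometric dichotomy applies: either the whole of $L\cap\Delta$ lies in $\partial\Delta$, in which case $[f(x),\psi(x)]\subseteq L\cap\Delta\subseteq\partial\Delta$; or the relative interior of $L\cap\Delta$ lies in the interior of $\Delta$, in which case the only points of $L\cap\Delta$ on $\partial\Delta$ are $a$ and $b$, so $f(x)\in\{a,b\}$ forces $\psi(x)=f(x)$ and $H(t,x)\equiv f(x)\in\partial\Delta$. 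Either way the claim holds.

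With the boundary condition along $H$ verified, homotopy invariance of the topological degree for maps of oriented $d$-dimensional manifolds satisfying $\partial M\subseteq (\cdot)^{-1}(\partial\Delta)$ gives $\deg(\psi)=\deg(f)\neq 0$. On the other hand, $\psi(M)\subseteq\partial\Delta$, so any regular value of $\psi$ in the interior of $\Delta$ has empty preimage, yielding $\deg(\psi)=0$ and a contradiction. The main obstacle is the convex-geometric dichotomy above, which is exactly what makes the boundary hypothesis $\partial M\subseteq f^{-1}(\partial\Delta)$ compatible with the straight-line homotopy; the remainder is routine degree theory. If $\partial\Delta$ has corners so $\psi$ is only continuous, one either approximates $\psi$ smoothly inside a collar neighborhood of $\partial\Delta$ or invokes the topological degree for continuous maps directly.
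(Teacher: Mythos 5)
Your overall strategy is sound and genuinely different from the paper's: where the paper (after normalizing $\Delta$ to the unit ball and pushing $g$ into its interior) studies the normalized difference $h(x)=\frac{f(x)-g(x)}{\|f(x)-g(x)\|_2}$ as a map to the sphere and shows $\deg(h|_{\partial M})=\deg(f|_{\partial M})=\deg(f)\neq 0$ while $h|_{\partial M}$ extends over $M$ into the sphere and hence has degree $0$, you instead build the ray--exit map $\psi$ into $\partial\Delta$ and homotope $f$ to $\psi$ along straight lines. The convex dichotomy you invoke to keep $H(t,\cdot)(\partial M)$ in $\partial\Delta$ is correct (if a point of the relative interior of $L\cap\Delta$ lies on $\partial\Delta$, a supporting hyperplane there must contain the whole segment, hence $L\cap\Delta\subseteq\partial\Delta$), and the final degree count is the same contradiction the paper reaches.

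However, there is a genuine gap: the exit map $\psi$ is \emph{not} continuous as soon as $g$ is allowed to take values on $\partial\Delta$, and your reduction (smoothing $g$ in the ambient space and then projecting back onto $\Delta$) does not prevent this. A concrete counterexample: take $\Delta=[0,1]^2$, $f\equiv(1/4,0)$, and $g=(1/2,t)$ as $t\downarrow 0$. For $t>0$ the ray from $g$ through $f$ leaves $\Delta$ at $y=0$, so $s^*=1$ and $\psi=(1/4,0)$; at $t=0$ the ray runs along the edge $y=0$ and exits at $(0,0)$, so $\psi$ jumps. The map $s^*$ (and hence $\psi$ and the homotopy $H$) is only continuous once one knows $g(M)\subseteq\operatorname{int}\Delta$; indeed, in that case the first branch of your dichotomy cannot even occur on $\partial M$ and one simply gets $\psi(x)=f(x)$ for $x\in\partial M$. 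The fix is small and is essentially what the paper's Step~1 does: replace $g$ by $(1-\eta)g+\eta c$ for an interior point $c$ and small $\eta>0$ (this preserves $f\neq g$ since $\inf_M\|f-g\|>0$ by compactness), run your argument to get a coincidence point for each $\eta$, and pass to a limit $\eta\to 0$. Without such a step, the assertion that ``$\psi:M\to\partial\Delta$ is continuous'' is false, and the homotopy-invariance argument that follows does not apply.
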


\bigskip

\begin{proof}

\noindent\textbf{Step 1:} We can assume that $\Delta$ has a smooth boundary,
that the image of $g$ does not intersect $\partial\Delta$,
and that $f$ is transveral to $\partial\Delta$.

Assume that Theorem~\ref{theorem:6} holds whenever $\Delta$ has a smooth boundary and
$\image(g) \cap \partial \Delta = \emptyset$,
but does not hold without these restrictions.
Let $g_0 : M \to \Delta$ be a continuous function for which $\image(g_0) \cap \partial \Delta \neq \emptyset$.
Fix $\ep \in (0,1)$, and let $\Delta'_\ep \subseteq \Delta_\ep \subseteq \Delta$ be two convex compact subsets of $\Delta$ whose boundary is smooth,
whose Hausdorff distance from $\Delta$ is smaller than $\ep$,
and such that $\Delta'_\ep$ and $\partial\Delta_\ep$ are disjoint.
Denote $\delta := \dist(\Delta'_\ep,\partial\Delta_\ep) > 0$,
the Euclidean distance between $\Delta'_\ep$ and $\partial\Delta_\ep$.

For every $x \in M$ let $g_\ep(x)$ be the point in $\Delta'_\ep$ closest to $g_0(x)$.
Then the image of the function $g_\ep$ does not intersect $\partial\Delta_\ep$.
Let $f_\ep : M \to \Delta$ be an $\delta$-perturbation of $f$ that is transversal to $\partial\Delta'_\ep$ and coincides with $f$ on $\partial M$.
Provided $\delta$ is sufficiently small, $\deg(f_\ep) = \deg(f) \neq 0$.
Let $M' := f_\ep^{-1}(\Delta_\ep)$,
and apply the theorem to $\Delta_\ep$, $M'$, $g_\ep$, and $f_\ep$.
It follows that there exists $x_\ep \in M$ such that $f_\ep(x_\ep) = g_\ep(x_\ep)$.
Since the manifold $M$ is compact, the sequence $(x_\ep)_{\ep > 0}$ has an accumulation point $x \in M$ as $\ep$ goes to 0,
which, by continuity, satisfies $f(x) = g(x)$.

Since every convex compact set with smooth boundary is diffeomorphic to a ball,
we will assume from now on that $\Delta$ is the $d$-dimensional unit ball.

\bigskip
\noindent\textbf{Step 2:} We can assume that $g$ is smooth.

Suppose that the theorem holds whenever the function $g$ is smooth,
and let $g_0$ be an arbitrary continuous function.
To show that the result holds for $g_0$, we will consider its convolution with a sequence of smooth bump functions
that converge to a Dirac function.

Embed $M$ in a Euclidean space $\dR^m$, for $m$ sufficiently large.
Denote by $\rho$ the restriction of the standard metric on $\dR^m$ to $M$.
The metric and the orientation define a maximal form $\omega$ on $M$.

Let $\mu : \dR \to \dR$ be the smooth function defined by
\[ \mu(z) := \left\{
\begin{array}{lll}
\exp(-1/z^2) & \ \ \ \ \ & z \geq 0,\\
0 & & z < 0.
\end{array}
\right. \]
and let $\kappa : \dR^m \to \dR$ be the smooth function defined by
\[ \kappa(y) := \prod_{i=1}^m \mu(y_i+1)\mu(1-y_i), \ \ \ \forall y \in \dR^m. \]
The function $\kappa$ is smooth and vanishes outside a ball of radius 1 around the origin.
For every $\ep > 0$ define a function $\kappa_\ep : \dR^m \to \dR$ by $\kappa_\ep(y) := \kappa(\tfrac{y}{\ep})$, for every $y \in \dR^m$.
The function $\kappa_\ep$ is smooth and vanishes outside a ball of radius $\ep$ around the origin.
Finally, for every $\ep > 0$ define the convolution $g_\ep : M_\ep \to \dR$, whose domain is $M_\ep := \{x \in \dR^m \colon \rho(x,M) < \ep\}$,
the $\ep$-neighborhood of $M$, by
\[ g_\ep(x) := \frac{\int_M g(x) \kappa_\ep(x-p) \omega}{\int_M \kappa_\ep(x-p) \omega}. \]
The function $g_\ep|_M$ satisfies the conditions of the theorem and is smooth.
Since the result holds for smooth functions,
for every $\ep > 0$ there is a point $x_\ep \in M$ that satisfies $g_\ep(x_\ep) = f(x_\ep)$.
Since $M$ is compact the function $g$ is uniformly continuous,
and therefore the pointwise convergence of the functions $(g_\ep)_{\ep > 0}$ to $g$ is uniform.
Since $f$ is continuous as well it follows that
any accumulation point $x$ of the sequence $(x_\ep)_{\ep > 0}$ as $\ep$ goes to 0 satisfies $g(x) = f(x)$,
as desired.

%\bigskip
%\noindent\textbf{Step 3:} The degree of $f$ is equal to the degree
%of $f|_{\partial M} : \partial M \to \partial \Delta$.
%
%Since $f$ is transveral to $\partial\Delta$,
%Sard's Lemma implies that almost every point $y \in \partial\Delta$ is a regular point of $f|_{\partial M}$,
%and hence also a regular point of $f$.
%The degree of $f|_{\partial M}$ is the sum of the inverse images of $y$, taking into account the orientation on the boundary.
%For a manifold with boundary,
%the orientation on the boundary is the unique orientation such that, by adding ``at the end'' the vector towards $M$ we get the orientation of $f$.
%In particular, the question whether the orientation of the boundary on $\partial M$ corresponds to
%the orientation of the boundary on $\partial \Delta$
%is equivalent to the question whether the orientation of $M$ (with the vector that points towards $M$) corresponds to
%the orientation on $\Delta$ (with the added vector that points towards $\Delta$).
%The claim follows.

\bigskip
\noindent\textbf{Step 3:} Deriving a contradiction.

Assume to the contrary that the theorem does not hold:
$f(x) \neq g(x)$ for every $x \in M$.
Let $h : M \to \partial \Delta$ be the function that is defined by
\[ h(x) := \frac{f(x)-g(x)}{\|f(x) - g(x)\|_2}. \]

Since there is no $x$ such that $f(x)=g(x)$, the function $h$ is well defined.
The function $h|_{\partial M}$ is homotopic to $f|_{\partial M}$, by the homotopy
\[ h_t(x) := \frac{f(x)-tg(x)}{\|f(x) - tg(x)\|_2}, \ \ \ \forall t \in [0,1]. \]
Note that the function $h_t$ is well defined, for every $t \in [0,1]$.
Indeed, for $t=1$ this was already established.
Consider now $t > 1$.
For every $x \in \partial M$ we have $f(x) \in \partial \Delta$, hence $\|f(x)\|_2 = 1$ while $\|tg(x)\|_2 < 1$.
In particular, the denominator of $h_t$ does not vanish.

It follows that the degree of the restricted function $h|_{\partial M}$ is equal to the degree of the restricted function $f|_{\partial M}$.
By the definition of the orientation of the boundary of $M$,
the degree of $f$ is equal to the degree
of $f|_{\partial M}$.
Thus, the degree of $h|_{\partial M}$ is nonzero.
%Since $y$ is regular for $f$,
%the intersection index of $h|_{\partial M}$ with the point $y$ is the degree of $f$, which is nonzero.

Now,
the function $h|_{\partial M}$ can be extended to a continuous function $h : M \to \Delta$,
and hence by Guillemin and Pollack (2010, page 108, first proposition)
it follows that the intersection index of $h|_{\partial M}$ with a point $y \in \partial \Delta$ is 0.
By Lemma~\ref{lemma:new2} it follows that the degree of $h|_{\partial M}$ is 0, a contradiction.
\end{proof}

\section{Approximating the Equilibrium Set by a Smooth Manifold}
\label{section:equilibrium}

In this section we consider strategic-form games with a fixed set of players and fixed sets of actions for each player.
Kohlberg and Mertens (1986) showed that the equilibrium set when one varies the payoff function is homeomorphic to the set of games.
The goal of this section is to show that the equilibrium set can be uniformly approximated by a smooth manifold.

\begin{definition}
A \emph{strategic game form} is a pair $(I,A)$
where $I$ is a finite set of players and $A = \times_{i \in I} A_i$ is the Cartesian product of finite action sets for the players.
\end{definition}

A \emph{payoff function for player~$i$} for the strategic game form $(I,A)$ is a function $u_i : A \to \dR$,
and a \emph{payoff function} is a collection $u = (u_i)_{i \in I}$ of payoff functions for the players.
Consequently, the set of all payoff functions is equivalent to $\dR^{A \times I}$.
A triplet $(I,A,u)$ where $u$ is a payoff function for the strategic game form $(I,A)$ is a \emph{game}.

A \emph{strategy} for player~$i$ is a probability distribution $x_i \in\Delta(A_i)$,
and a \emph{strategy profile} is a collection $x = (x_i)_{i \in I}$ of strategies for the players.
It follows that the set of all strategy profiles, denoted $X$, is equivalent%
\footnote{When writing $\cup_{i \in I} A_i$ we implicitly assume that the action sets of the players are disjoint.}
to $\times_{i \in I} \Delta(A_i) \subset \dR^{\cup_{i \in I} A_i}$.
A payoff function $u_i$ for player~$i$ is extended to a function from $X$ to $\dR$ in a multilinear fashion.

A strategy profile $x \in X$ is a (Nash) \emph{equilibrium} of the game $(I,A,u)$ if $u_i(x) \geq u_i(a_i,x_{-i})$
for every player $i \in I$ and every action $a_i \in A_i$.
When the strategic game form is fixed,
the equilibrium set is the collection of all pairs of a payoff function and equilibrium in the game induced by this payoff function.

\begin{definition}
Let $(I,A)$ be a strategic game form.
The \emph{equilibrium set} of $(I,A)$ is the set
\[ M := \{ (u,x) \in \dR^{A\times I} \times X \colon x \hbox{ is an equilibrium of } (I,A,u)\} \subset \dR^{A \times I} \times \dR^{\cup_{i \in I}A_i}. \]
\end{definition}

As mentioned above, Kohlberg and Mertens (1986) proved that the set $M$ is homeomorphic to the set of games, namely, to $\dR^{A \times I}$.
An important concept that we will need is that of $O_n$-equilibria, which we define now.

\begin{definition}
Let $(I,A)$ be a strategic game form, let $u : A \to \dR^I$ be a payoff function, and let $n > 0$.
The strategy profile $x$ is an \emph{$O_n$-equilibrium} of the game $(I,A,u)$ if for every player $i \in I$ and every action $a_i \in A_i$,
\begin{equation}
\label{equ:60}
x_i(a_i) = \frac{\exp(nu_i(x,a_i))}{\sum_{a'_i \in A_i}\exp(nu_i(x,a'_i))}.
\end{equation}
\end{definition}

Standard continuity arguments show that a limit of $O_n$ equilibria as $n$ goes to infinity is a Nash equilibrium.
This observation is stated in the following lemma for future reference.

\begin{lemma}
\label{lemma:on}
Let $(n^{[k]})_{k=1}^\infty$ be a sequence of real numbers that go to infinity.
Let $(u^{[k]})_{k=1}^\infty$ be a sequence of positive payoff functions for the strategic game form $(I,A)$,
and let $(x^{[k]})_{k=1}^\infty$ be a sequence of strategy profiles
such that $x^{[k]}$ is an $O_{n^{[k]}}$-equilibrium in the game $(I,A,u^{[k]})$.
If the two limits $u := \lim_{k \to \infty} u^{[k]}$ and $x := \lim_{k \to \infty}x^{[k]}$ exist,
then the strategy profile $x$ is a Nash equilibrium in the game $(I,A,u)$.
\end{lemma}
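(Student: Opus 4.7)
The plan is to show that every action in the support of the limiting strategy $x_i$ is a best response of player~$i$ to $x_{-i}$ in the game $(I,A,u)$, which is exactly the Nash equilibrium condition. The key observation is that the logit formula~\eqref{equ:60} concentrates mass on best responses as $n \to \infty$.

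First I would fix a player $i \in I$ and two actions $a_i, a^*_i \in A_i$, and take the ratio of the defining identities \eqref{equ:60} for $x^{[k]}$ in the game $(I,A,u^{[k]})$. This eliminates the normalizing denominator and yields
\[
\frac{x^{[k]}_i(a_i)}{x^{[k]}_i(a^*_i)} = \exp\!\Bigl(n^{[k]}\bigl(u^{[k]}_i(x^{[k]},a_i) - u^{[k]}_i(x^{[k]},a^*_i)\bigr)\Bigr).
\]
Note that $x^{[k]}_i(a^*_i) > 0$ for every $k$, since the right-hand side of \eqref{equ:60} is always strictly positive, so this ratio is well defined.

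Next I would analyze the behavior of the exponent as $k \to \infty$. Since each $u_i$ is multilinear and hence continuous in its arguments, and since we have the joint convergences $u^{[k]} \to u$ and $x^{[k]} \to x$, the difference inside the exponent converges to $u_i(x, a_i) - u_i(x, a^*_i)$. Suppose for contradiction that some action $a_i$ lies in the support of $x_i$ (so $x_i(a_i) > 0$) but that $u_i(x, a_i) < u_i(x, a^*_i)$ for some other action $a^*_i$. Then for all sufficiently large $k$ the exponent is bounded above by a strictly negative constant, and since $n^{[k]} \to \infty$ the right-hand side of the ratio tends to $0$.

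Finally, I would extract the contradiction: the left-hand side is at least $x^{[k]}_i(a_i)$, because $x^{[k]}_i(a^*_i) \leq 1$, and this lower bound converges to $x_i(a_i) > 0$, contradicting the fact that the right-hand side tends to $0$. Hence every action $a_i$ in the support of $x_i$ satisfies $u_i(x, a_i) \geq u_i(x, a^*_i)$ for every $a^*_i \in A_i$, so $x$ is a Nash equilibrium of $(I,A,u)$. The argument is essentially routine; there is no substantial obstacle, and the positivity hypothesis on $u^{[k]}$ plays no role in this particular lemma.
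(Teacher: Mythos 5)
Your proof is correct and uses essentially the same approach as the paper: both arguments exploit the logit formula \eqref{equ:60} to show that any action whose limiting payoff is strictly dominated by another action's must receive vanishing probability, the only cosmetic difference being that you cancel the normalizing denominator by taking a ratio while the paper bounds the full probability $x_i^{[k]}(\widehat a_i)$ by discarding all but one term of the sum. Your remark that the positivity hypothesis on $u^{[k]}$ is not used is accurate — it is also unused in the paper's proof.
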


\begin{proof}
Fix a player $i \in I$ and two actions $a_i,\widehat a_i \in A_i$.
We will prove that if $u_i(a_i,x_{-i}) > u_i(\widehat a_i,x_{-i})$ then $x_i(\widehat a_i) = 0$.
Since $u_i(a_i,x_{-i}) > u_i(\widehat a_i,x_{-i})$ it follows that there is $\delta > 0$ such that for every $k$ sufficiently large,
\begin{equation}
\label{equ:61}
u_i^{[k]}(a_i,x_{-i}^{[k]}) > u_i^{[k]}(\widehat a_i,x_{-i}^{[k]}) + \delta.
\end{equation}
Since $x^{[k]}$ is an $O_{n^{[k]}}$-equilibrium in the game $(I,A,u^{[k]})$,
we have by Eq.~\eqref{equ:60}
\begin{eqnarray*}
x_i^{[k]}(\widehat a_i)
&=& \frac{\exp(n^{[k]} u_i^{[k]}(x_{-i}^{[k]},\widehat a_i))}{\sum_{a'_i \in A_i}\exp(n^{[k]}u_i^{[k]}(x_{-i}^{[k]},a'_i))}\\
&<& \frac{\exp(n^{[k]} u_i^{[k]}(x_{-i}^{[k]},\widehat a_i))}{\exp(n^{[k]}u_i^{[k]}(x_{-i}^{[k]},a_i))}\\
&<& \frac{\exp(n^{[k]} u_i^{[k]}(x_{-i}^{[k]},\widehat a_i))}{\exp(n^{[k]}u_i^{[k]}(x_{-i}^{[k]},\widehat a_i)+n^{[k]}\delta)}
= \frac{1}{\exp(n^{[k]}\delta)},
\end{eqnarray*}
and the claim follows.
\end{proof}

\bigskip

For every real number $n$ denote the set of all $O_n$-equilibria by
\[ M_n := \{ (u,x) \colon x \hbox{ is an } O_n\hbox{-equilibrium in } u\}. \]
We will show that $M_n$ is a (smooth) manifold, and that as $n$ goes to infinity, the manifold $M_n$ converges uniformly to the equilibrium set $M$.

\begin{theorem}
\label{theorem:manifold:mn}
The set $M_n$ is an $(A \times I)$-dimensional manifold.
\end{theorem}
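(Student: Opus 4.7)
The plan is to realize $M_n$ as the preimage of a regular value under a smooth map on an open ambient manifold, and then apply the regular value theorem. First, observe that the right-hand side of the defining equation~\eqref{equ:60} is strictly positive, so every $O_n$-equilibrium assigns strictly positive probability to every action. Hence $M_n\subseteq \dR^{A\times I}\times \mathrm{int}(X)$, and this ambient space is a smooth boundaryless manifold of dimension $|A|\cdot|I|+\sum_{i\in I}(|A_i|-1)$. For each player $i\in I$ fix a reference action $a_i^*\in A_i$. Because the right-hand sides of \eqref{equ:60} sum to $1$ over $a_i\in A_i$, only $|A_i|-1$ of these equations are independent. I would encode them by the smooth map
\[ H:\dR^{A\times I}\times \mathrm{int}(X)\to \dR^{\sum_{i\in I}(|A_i|-1)}, \]
\[ H_{i,a_i}(u,x):=\log x_i(a_i)-\log x_i(a_i^*)-n\bigl(u_i(x,a_i)-u_i(x,a_i^*)\bigr), \]
indexed by $i\in I$ and $a_i\in A_i\setminus\{a_i^*\}$. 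Then $M_n=H^{-1}(0)$.

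The key step is to show that $0$ is a regular value, which I would prove by exhibiting, for each output coordinate of $H$, a payoff perturbation that moves that coordinate while fixing all the others. Fix $(u,x)\in M_n$, a player $i$, and an action $a_i\neq a_i^*$, and pick any action profile $a_{-i}$ of the remaining players. Multilinearity of $u_i$ in $x$ gives
\[ \frac{\partial u_i(x,b_i)}{\partial u_i(a_i,a_{-i})}=\mathbf{1}_{b_i=a_i}\prod_{j\neq i}x_j(a_{-i,j}), \]
and since $a_i\neq a_i^*$ a direct computation yields
\[ \frac{\partial H_{j,b_j}}{\partial u_i(a_i,a_{-i})}=-n\,\mathbf{1}_{j=i,\,b_j=a_i}\prod_{\ell\neq i}x_\ell(a_{-i,\ell}). \]
Because $x\in\mathrm{int}(X)$, this product is strictly positive, so varying the single payoff entry $u_i(a_i,a_{-i})$ alters only the coordinate $H_{i,a_i}$ of $H$. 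Letting $(i,a_i)$ range over all pairs with $a_i\neq a_i^*$ then yields a family of payoff directions whose images under $dH$ are nonzero scalar multiples of the standard basis vectors of $\dR^{\sum_i(|A_i|-1)}$, so $dH$ is surjective at every point of $M_n$.

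By the regular value theorem, $M_n$ is a smooth manifold of codimension $\sum_i(|A_i|-1)$ inside an ambient manifold of dimension $|A|\cdot|I|+\sum_i(|A_i|-1)$, and therefore of dimension $|A|\cdot|I|$, as claimed. I do not anticipate any serious obstacle here; the principal insight is that the log-ratio parametrization linearizes the exponential quotient in \eqref{equ:60}, and combined with the multilinearity of the payoff in $x$ this decouples the defining equations completely along payoff directions, so one never needs to differentiate in the strategy variables at all.
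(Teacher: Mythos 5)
Your argument is correct, but it takes a genuinely different route from the paper's. You realize $M_n$ as the zero set of a smooth map $H$ on the open ambient manifold $\dR^{A\times I}\times\mathrm{int}(X)$ and verify $0$ is a regular value by differentiating along payoff directions only: for each target coordinate $H_{i,a_i}$ with $a_i\neq a_i^*$ you pick a single payoff entry $u_i(a_i,a_{-i})$, note that multilinearity and interiority of $x$ make $\partial H_{i,a_i}/\partial u_i(a_i,a_{-i})$ a strictly negative multiple of the standard basis vector, and conclude surjectivity of $dH$ — then the regular value theorem gives the dimension count. I checked the computations: the log--ratio encoding of the $O_n$-fixed-point condition is indeed equivalent to Eq.~\eqref{equ:60}, the derivative formula is right, and since you already span the codomain using only the $u$-directions, the $x$-derivatives (which you never compute) cannot hurt. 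This is a clean local argument.

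The paper proceeds differently: following Kohlberg and Mertens (1986), it exhibits an explicit map $\varphi_n:M_n\to\dR^{A\times I}$ built from the function $g^{(n)}$ of Lemma~\ref{lemma:g}, and the bulk of the work is that lemma, which shows $g^{(n)}$ is one-to-one, onto, and an immersion (via strict diagonal dominance of its Jacobian). This yields more than your argument: it shows $M_n$ is globally diffeomorphic to $\dR^{A\times I}$, not merely a smooth submanifold. That global conclusion is what the paper later relies on in Section~\ref{section:proof} when it asserts that the orientations of $M_n$ and $U$ can be chosen so that the projection $\pi|_{M_n}$ has degree $1$; with your purely local regular-value argument, $M_n$ could a priori have complicated topology (e.g., be disconnected), and the degree statement would need a separate argument. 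So both proofs establish the theorem as literally stated, but yours is more economical and less revealing, while the paper's is more laborious and sets up later uses. If you wanted to recover the global structure from your setup, you would essentially have to prove that the projection $M_n\to\dR^{A\times I}$, $(u,x)\mapsto \varphi_n(u,x)$, is a proper bijective local diffeomorphism — which is exactly Lemma~\ref{lemma:g} in disguise.
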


To prove Theorem~\ref{theorem:manifold:mn} we need to study a certain function that will be used in the definition of
the immersion%
\footnote{An \emph{immersion} is a differentiable function between differentiable manifolds whose derivative is everywhere injective (one-to-one).}
between $\dR^{A \times I}$ and $M_n$.
The keen reader will identify the origin of this function
and the proof of Theorem~\ref{theorem:manifold:mn} in the work of Kohlberg and Mertens (1986).

\begin{lemma}
\label{lemma:g}
For every $n > 0$ define the function $g^{(n)} : \dR^d \to \dR^d$ by
\[ g^{(n)}_i(x) = x_i + \frac{\exp(nx_i)}{\sum_{j=1}^d \exp(n x_j)}, \ \ \ \forall i\in \{1,2,\cdots,d\}. \]
The function $g^{(n)}$ is one-to-one, onto, and an immersion.
\end{lemma}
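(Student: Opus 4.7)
The plan is to recognize $g^{(n)}$ as the gradient of an explicit, strictly convex, coercive potential, and then read off all three properties from standard convex analysis. Define
\[ \phi^{(n)}(x) := \tfrac{1}{2}\|x\|_2^2 + \tfrac{1}{n}\log\!\Bigl(\sum_{j=1}^d \exp(n x_j)\Bigr). \]
A direct differentiation gives $\partial_i \phi^{(n)}(x) = x_i + \frac{\exp(nx_i)}{\sum_j \exp(nx_j)} = g^{(n)}_i(x)$, so $g^{(n)} = \nabla \phi^{(n)}$.

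First I would compute the Hessian: writing $p_i := \exp(nx_i)/\sum_j \exp(nx_j)$, one obtains
\[ \mathrm{Hess}\,\phi^{(n)}(x) = I_d + n\bigl(\mathrm{diag}(p) - pp^{\top}\bigr). \]
The matrix $\mathrm{diag}(p)-pp^{\top}$ is the covariance matrix of the categorical distribution with parameter $p$, hence positive semidefinite, so the Hessian is strictly positive definite for every $x \in \dR^d$. Because this Hessian equals the Jacobian $Dg^{(n)}(x)$, the derivative of $g^{(n)}$ is invertible (in particular injective) at every point, which yields that $g^{(n)}$ is a (smooth) immersion.

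Second, strict positive definiteness of the Hessian means $\phi^{(n)}$ is strictly convex; hence its gradient $g^{(n)}$ is one-to-one, since $\nabla\phi^{(n)}(x)=\nabla\phi^{(n)}(y)$ with $x\neq y$ would contradict the strict inequality in the first-order characterization of strict convexity.

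Finally, for surjectivity I would fix an arbitrary $y\in\dR^d$ and study $\psi_y(x):=\phi^{(n)}(x)-\langle y,x\rangle$. Using the elementary bound $\log\sum_j\exp(nx_j)\ge n\max_j x_j$ we get $\phi^{(n)}(x)\ge \tfrac{1}{2}\|x\|_2^2+\max_j x_j$, so $\psi_y(x)\to +\infty$ as $\|x\|_2\to\infty$. Thus $\psi_y$ is smooth, strictly convex, and coercive, and therefore attains a unique global minimum at some $x^\star\in\dR^d$. At this point $0=\nabla\psi_y(x^\star)=g^{(n)}(x^\star)-y$, so $y\in \mathrm{image}(g^{(n)})$. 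I do not expect any real obstacle: the only point that requires a moment of care is recognizing $\mathrm{diag}(p)-pp^\top$ as a covariance matrix to conclude positive definiteness of the Hessian, and choosing an explicit coercive lower bound on $\phi^{(n)}$ to invoke the standard argmin argument for surjectivity.
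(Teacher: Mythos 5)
Your proof is correct, but it takes a genuinely different route from the paper's. The paper argues directly from the structure of the Jacobian $Dg^{(n)}$: it verifies by computation that this matrix has positive diagonal, negative off-diagonal, and positive row sums, i.e.\ is strictly diagonally dominant, hence invertible; invertibility gives the immersion property, a separate ``one-to-one via the integral $\int_0^1 Dg^{(n)}_{tx}\,\rmd t$'' argument gives injectivity, and surjectivity is obtained by showing the image is both open (inverse function theorem) and closed (using $\|x-g^{(n)}(x)\|_2\leq 1$). You instead identify $g^{(n)}=\nabla\phi^{(n)}$ for the strictly convex, coercive potential $\phi^{(n)}(x)=\tfrac12\|x\|_2^2+\tfrac1n\log\sum_j e^{nx_j}$, so that the Jacobian is the Hessian $I_d+n(\mathrm{diag}(p)-pp^{\top})$, which is positive definite (covariance matrix argument), whence immersion and injectivity follow from strict convexity, and surjectivity follows by minimizing $\phi^{(n)}(x)-\langle y,x\rangle$, which is coercive because $\tfrac1n\log\sum_j e^{nx_j}\geq\max_j x_j\geq-\|x\|_2$. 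Your route is more conceptual: it explains at once why $Dg^{(n)}$ is symmetric and positive definite, and the Legendre-type minimization cleanly replaces the separate open-and-closed argument for surjectivity. The paper's route is more elementary (no potential needs to be guessed) and isolates strict diagonal dominance as the reusable structural property, but requires three somewhat independent calculations. Both proofs are complete and correct.
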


\begin{proof}

\noindent\textbf{Step 1:} The function $g$ is an immersion.

An $n \times n$ matrix $A$ is \emph{strictly diagonal dominant} if
(a) its diagonal entries are positive,
(b) its off-diagonal entries are negative,
and
(c) the sum of elements in each row is positive.
Note that every strictly diagonal dominant  matrix is invertible.

We first argue that the Jacobian matrix of $g$ is a strictly diagonal dominant matrix at all points.
Indeed, simple algebraic calculations show that for every $i \in \{1,2,\cdots,d\}$,
\begin{eqnarray}
\frac{\partial g_i}{\partial x_i}(x) &=& 1 + n\exp(nx_i)\left( \sum_{k \neq i} \exp(nx_k)\right) > 0,\\
\frac{\partial g_i}{\partial x_j}(x) &=& -\frac{n\exp(n (x_i+x_j)))}{\left(\sum_{k=1}^d \exp(nx_k)\right)^2} < 0, \ \ \ \forall j \neq i.
\end{eqnarray}
In particular, Conditions~(a) and~(b) hold for the Jacobian matrix of $g$ at every point $x$.
We also have
\[ \sum_{i=1}^d g_{i}(x) = 1 + \sum_{i=1}^d x_i, \]
and therefore
\[ \sum_{i=1}^d\frac{\partial g_i}{\partial x_j}(x) = 1 > 0, \ \ \ \forall j \in \{1,2,\ldots,d\}, \]
so that Condition~(c) holds as well, and the Jacobian matrix is strictly diagonal dominant at all points.
It follows that $g$ is an immersion.

\bigskip

\noindent\textbf{Step 2:} The function $g$ is onto.

To prove that $g$ is onto we will show that its image is both open and closed.
Since the Jacobian matrix of $g$ at every point $x$ is invertible,
by the Open Mapping Theorem the image of $g$ is an open set.
To show that the image of $g$ is closed,
note that $\|x-g(x)\|_2 \leq 1$ for every $x \in \dR^d$,
and consider a sequence $(y^{[k]})_{k \in \dN}$ of points in the image of $g$ that converges to a point $y$.
For each $k \in \dN$ let $x^{[k]} \in \dR^d$ satisfy $y^{[k]} = g(x^{[k]})$.
Since $\| x^{[k]} - y^{[k]}\|_2 \leq 1$,
and since the sequence $(y^{[k]})_{k \in \dN}$ converges,
it follows that there is a subsequence $(x^{[k_l]})_{l \in \dN}$ that converges to a limit $x$.
Since the function $g$ is continuous, $g(x) = y$, so that $y$ is in the image of $g$.

\bigskip

\noindent\textbf{Step 3:} The function $g$ is one-to-one.

We argue that any function whose Jacobian matrix is strictly diagonal dominant is one-to-one.
Indeed, let $f$ be such a function, assume w.l.o.g.~that $f(\vec 0) = \vec 0$,
and fix $x \neq \vec 0$.
We will show that $f(x) \neq \vec 0$.
We have
\[ f(x) = f(0) + \int_{t=0}^1 df_{tx} \cdot x \rmd t = \left( \int_{t=0}^1 df_{tx} \rmd t \right) \cdot x. \]
The matrix $\int_{t=0}^1 df_{tx} \rmd t$, as an integral of strictly diagonal dominant matrices,
is strictly diagonal dominant, hence invertible.
In particular, $\left( \int_{t=0}^1 df_{tx} \rmd t \right) \cdot x \neq \vec 0$.
\end{proof}

\bigskip

\begin{proof}[Proof of Theorem~\ref{theorem:manifold:mn}]
Kohlberg and Mertens (1986) provided an equivalent representation to games.
Let $u : A \to \dR^I$ be a payoff function.
For every $i \in I$ define two functions $\widetilde u_i : A \to \dR$ and $\overline u_i : A_i \to \dR$ by
\begin{eqnarray}
\overline u_i(a_i) &:=& \frac{1}{|A_{-i}|} \sum_{a_{-i} \in A_{-i}} u_i(a_i,a_{-i}),\\
\widetilde u_i(a) &:=& u_i(a) -\overline u_i(a_i).
\end{eqnarray}
We denote this representation by $u = \langle \widetilde u,\overline u \rangle$.

Fix $n > 0$ and define a function $z_n : M_n\to \dR^{\cup_{i \in I} A_i}$ by
\begin{equation*}
z_{n,i,a_i}(u,x) := u_i(a_i,x_{-i}) + \frac{\exp(nu_i(a_i,x_{-i}))}{\sum_{j \in I}\exp(nu_j(a_j,x_{-j}))}, \ \ \ \forall i\in I, a_i\in A_i.
\end{equation*}
Define now a function $\varphi_n : M_n \to \dR^{A \times I}$ by
\begin{equation}
\label{equ:62}
\varphi_n(u,x) := \langle \widetilde u, z_n(u,x) \rangle.
\end{equation}
Lemma~\ref{lemma:g} implies that the function $\varphi_n$ is one-to-one, onto, and an immersion.
The result follows.
\end{proof}

\bigskip

We now prove that the inverse of $g^{(n)}$ converges uniformly as $n$ goes to infinity,
and we provide an explicit form to the limit function, which is nothing but the homeomorphism defined by Kohlberg and Mertens (1986).

\begin{lemma}
\label{lemma:g2}
For every $n > 0$ let $h^{(n)} : \dR^d \to \dR^d$ be the inverse of $g^{(n)}$.
Let $h : \dR^d \to \dR^d$ be the function defined by
\[ h_i(y) := \min\{y_i,\alpha^*\}, \ \ \ \forall i=1,2,\cdots,d, \]
where $\alpha^* := \max\left\{ \alpha \in \dR \colon \sum_{i=1}^d(y_i-\alpha)_+ = 1\right\}$.
Then the sequence of functions $(h^{(n)})_{n > 0}$ converges uniformly to the function $h$.
\end{lemma}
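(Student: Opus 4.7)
The plan is to combine three ingredients: pointwise convergence of $h^{(n)}$ to $h$, a uniform $1$-Lipschitz bound on $h^{(n)}$, and translation invariance, which together yield uniform convergence.

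First I will establish pointwise convergence. Fix $y \in \dR^d$ and set $x^{(n)} := h^{(n)}(y)$, so that $g^{(n)}(x^{(n)}) = y$ reads $x^{(n)}_i + p^{(n)}_i = y_i$, where $p^{(n)}_i$ are softmax probabilities lying in the unit simplex; in particular $(x^{(n)})$ is bounded. Passing to any convergent subsequence $x^{(n_k)} \to x^\infty$ and setting $\alpha^\infty := \max_i x^\infty_i$, I argue by cases: indices strictly below the max have $p^{(n_k)}_i \to 0$, forcing $x^\infty_i = y_i < \alpha^\infty$; indices at the max yield $y_i - x^\infty_i = y_i - \alpha^\infty \geq 0$ as a limit of softmax weights, and summing over $i$ gives $\sum_i (y_i - \alpha^\infty)_+ = 1$. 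The equation $\sum_i (y_i - \alpha)_+ = 1$ has a unique solution (the left side is strictly decreasing in $\alpha$ wherever it is positive), so $\alpha^\infty = \alpha^*$ and $x^\infty_i = \min\{y_i, \alpha^*\} = h_i(y)$.

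Next I will prove a uniform $1$-Lipschitz bound on $h^{(n)}$ in the Euclidean norm. The Jacobian $J_{g^{(n)}}(x) = I + n(\mathrm{diag}(\sigma) - \sigma \sigma^\top)$ involves the covariance matrix of the categorical distribution $\sigma = \sigma^{(n)}(x)$, which is symmetric positive semidefinite. Hence $J_{g^{(n)}} \succeq I$ as a symmetric matrix and its inverse $J_{h^{(n)}}$ has operator norm at most $1$, from which the Lipschitz estimate follows by integrating along line segments.

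Finally I will combine these with translation invariance to deduce uniform convergence. The softmax is invariant under $x \mapsto x + c \vec 1$, so both $h^{(n)}$ and $h$ commute with translations by multiples of the all-ones vector, and the discrepancy $y \mapsto h^{(n)}(y) - h(y)$ descends to a function on $\dR^d / \dR \vec 1$. Normalizing by $\alpha^*(y) = 0$ gives $\sum_i (y_i)_+ = 1$, hence $y_i \leq 1$ for all $i$. On the compact set $K_L := \{y \colon \alpha^*(y) = 0,\ -L \leq y_i \leq 1\}$, the Lipschitz bound provides equicontinuity of $\{h^{(n)}\}$, which together with pointwise convergence to the continuous function $h$ yields uniform convergence on $K_L$ by the standard Arzela-Ascoli argument. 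For $y$ with some coordinate $y_i < -L$, the coordinate $j$ satisfying $y_j \geq 1/d$ (which exists since $\sum_i (y_i)_+ = 1$) gives $x^{(n)}_j \geq 1/d - 1$ while $x^{(n)}_i \leq y_i < -L$, so $p^{(n)}_i \leq \exp(-n(L - 1 + 1/d))$; hence $h^{(n)}_i(y)$ and $h_i(y) = y_i$ differ by at most this exponentially small amount, and the remaining shallow coordinates reduce to a lower-dimensional instance of the problem handled by induction on $d$. The principal obstacle is precisely this noncompactness of the domain: Arzela-Ascoli alone gives only uniform convergence on compact sets, and bridging the gap requires the exponential tail estimate for subdominant softmax weights combined with the dimensional induction.
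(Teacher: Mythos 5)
Your proof takes a genuinely different route from the paper's. The paper proves a direct, explicit estimate: it orders the coordinates, defines $\alpha_i := y_i - h_i(y)$ and $\alpha_i^{(n)} := y_i - h_i^{(n)}(y)$, observes that both vectors are nonnegative and sum to~$1$, and then establishes the one-sided bound $\alpha_i^{(n)} < \alpha_i + \ep$ via a short contradiction argument (if the softmax weight $\alpha_i^{(n)}$ were large, then $x_i^{(n)}$ would be far below $x_d^{(n)}$, making $\alpha_i^{(n)}$ exponentially small). The two-sided bound then follows from the normalization $\sum_i \alpha_i^{(n)} = \sum_i\alpha_i = 1$, and this uniform estimate holds for all $y$ at once with no compactness argument. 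Your route instead assembles pointwise convergence, a global Lipschitz bound, translation invariance, Arzel\`a--Ascoli on a compact fundamental domain, and a tail estimate plus dimensional induction for the unbounded directions. The pointwise convergence, the Jacobian bound $J_{g^{(n)}} = I + n(\mathrm{diag}(\sigma) - \sigma\sigma^\top) \succeq I$ (hence $h^{(n)}$ is $1$-Lipschitz), and the quotient by $\dR\vec 1$ are all correct and pleasant observations.

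However, the dimensional induction step has a real gap. After peeling off a coordinate $i$ with $y_i < -L$, the remaining coordinates do \emph{not} satisfy the $(d-1)$-dimensional equation $x_j + \exp(n x_j)/\sum_{k \neq i}\exp(n x_k) = y_j$: they satisfy the $d$-dimensional one, whose softmax normalizer still includes the contribution $\exp(n x_i^{(n)})$ from the peeled coordinate. Concretely, writing $q_j^{(n)} := \exp(n x_j^{(n)})/\sum_{k\neq i}\exp(n x_k^{(n)})$ and $p_i^{(n)}$ for the softmax weight of coordinate $i$, one has $x_j^{(n)} + q_j^{(n)} = y_j + q_j^{(n)}p_i^{(n)}$, so $(x_j^{(n)})_{j\neq i}$ solves the $(d-1)$-dimensional problem only for a \emph{perturbed} input $y' = (y_j + q_j^{(n)}p_i^{(n)})_{j\neq i}$, which is exponentially close to $(y_j)_{j\neq i}$ but depends on $n$. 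To close the induction you would need to combine the inductive uniform convergence with the $1$-Lipschitz bound on $h^{d-1}$ (to absorb the $y'\to y$ perturbation), and also verify that the $\alpha^*$ of the $(d-1)$-dimensional problem coincides with that of the $d$-dimensional one (which holds here because $y_i < 0 = \alpha^*$, so $(y_i - \alpha^*)_+ = 0$). None of this is in your write-up; you flag it as ``the principal obstacle'' but the reduction as stated does not yet work. These steps can all be supplied, so the approach is salvageable, but it ends up considerably heavier than the paper's one-paragraph direct estimate.
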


\begin{proof}
Fix $\ep > 0$, and let $n > 0$ be sufficiently large so that $\ep > 1/(1+\exp(\ep n))$.
Fix $y \in \dR^d$ and define $x := h(y)$ and $x^{(n)} := h^{(n)}(y)$.
Assume w.l.o.g.~that $y_1 \leq y_2 \leq \cdots \leq y_d$.
By the definition of $g^{(n)}$ we have $x^{(n)}_1 \leq x^{(n)}_2 \leq\cdots \leq x^{(n)}_d$.
By the definition of $h$ we have $x_1 \leq x_2 \leq \cdots \leq x_d$.

Since
\[ \sum_{i=1}^d (y_i - \alpha^*)_+ = 1 = \sum_{i=1}^d (y_i-x^{(n)}_i) = \sum_{i=1}^d (y_i-x^{(n)}_i)_+, \]
and since $x^{(n)}_1 \leq x^{(n)}_2 \leq\cdots \leq x^{(n)}_d$,
it follows that $x^{(n)}_d \geq \alpha^* = x_d$.

For every $i \in \{1,2,\ldots,d\}$ denote
\[ \alpha_i := y_i - x_i \geq 0, \]
and
\[ \alpha^{(n)}_i := y_i - x^{(n)}_i \geq 0. \]
We now claim that $\alpha_i^{(n)} < \alpha_i + \ep$.
Indeed, assume to the contrary that for some $i \in \{1,2,\ldots,d\}$ we have $\alpha_i^{(n)} \geq \alpha_i + \ep$.
Then in particular
\[ x^{(n)}_i = y_i - \alpha^{(n)}_i \leq y_i - \alpha_i - \ep = x_i - \ep \leq x_d - \ep \leq x_d^{(n)}-\ep. \]
Therefore, by the definition of $g^{(n)}$,
\begin{eqnarray*}
\ep &\leq& \alpha_i^{(n)}
= \frac{\exp(nx_i^{(n)})}{\sum_{j=1}^d\exp(nx_j^{(n)})}\\
&\leq& \frac{\exp(nx_i^{(n)})}{\exp(nx_i^{(n)} + nx_d^{(n)})}\\
&=& \frac{1}{1+\exp\bigl( n(x^{(n)}_d - x^{(n)}_i)\bigr)}
\leq \frac{1}{1+\exp(\ep n)},
\end{eqnarray*}
a contradiction to the choice of $n$.
Since $\sum_{i=1}^d \alpha_i^{(n)} = 1 = \sum_{i=1}^d \alpha_i$,
we deduce that for every $i \in \{1,2,\ldots,d\}$ we have
\[ \alpha_i -d\ep < \alpha_i^{(n)} < \alpha_i + \ep, \]
which implies that $\|h^{(n)}(y) - h(y)\|_\infty \leq d\ep$, and the desired result follows.
\end{proof}

\bigskip

Kohlberg and Mertens (1986) proved that the following function $\varphi : M \to \dR^{A \times I}$ is a homeomorphism:
\[ \varphi(u,x) := \langle \widetilde u,z(u,x)\rangle, \ \ \ \forall (u,x) \in M, \]
where notations follow the proof of Theorem~\ref{theorem:manifold:mn} and
\begin{equation*}
z_{i,a_i}(u,x) := u_i(a_i,x_{-i}) + x_i(a_i), \ \ \ \forall i \in I, a_i \in A_i.
\end{equation*}
%Define a homeomorphism $\psi : \dR^{A \times I} \to M$ by $\psi := \varphi^{-1}$.
%For every $n > 0$ define a diffeomorphism $\psi_n : \dR^{A \times I} \to M_n$ by $\psi_n := (\varphi_n)^{-1}$.
As a conclusion of Lemma~\ref{lemma:g2} we deduce that the manifolds $(M_n)_{n>0}$ converge to the equilibrium set $M$ in a strong sense.

\begin{theorem}
\label{theorem:converge}
For every $\ep > 0$ there is $N = N(\ep) > 0$ such that
for every $n \geq N$ we have
\[ \|\varphi^{-1}(y) - (\varphi_n)^{-1}(y)\|_2 \leq \ep, \ \ \ \forall y \in \dR^{A \times I}. \]
\end{theorem}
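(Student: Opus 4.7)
The plan is to invert $\varphi$ and $\varphi_n$ in closed form using Lemma~\ref{lemma:g}, reduce the comparison to a single player-by-player application of Lemma~\ref{lemma:g2}, and then absorb the resulting error.

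First, I would decompose an arbitrary $y \in \dR^{A\times I}$ canonically as $y = \langle \widetilde u, w \rangle$, with $\widetilde u$ in the zero row-average subspace and $w = (w_i)_{i\in I} \in \dR^{\cup_i A_i}$. Since $\widetilde u$ is the common first coordinate of both $\varphi^{-1}(y)$ and $\varphi_n^{-1}(y)$, the $\widetilde u$ part of the difference vanishes, and only the $(\overline u, x)$ component needs to be controlled.

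Second, I would invert $\varphi_n$ explicitly. From the definition of $z_n$ in Eq.~\eqref{equ:62}, for each player $i$ the equation $w_i = z_{n,i}(u,x)$ is exactly $w_i = g^{(n)}(v_i)$ with $v_{i,a_i} := u_i(a_i,x_{-i})$. By Lemma~\ref{lemma:g} the map $g^{(n)}$ is a bijection with inverse $h^{(n)}$, yielding
\[ v_i = h^{(n)}(w_i), \qquad x^{(n)}_i(a_i) = w_{i,a_i} - h^{(n)}(w_i)_{a_i}, \qquad \overline u^{(n)}_i(a_i) = h^{(n)}(w_i)_{a_i} - \widetilde u_i(a_i, x^{(n)}_{-i}). \]
The Kohlberg-Mertens inverse $\varphi^{-1}(y) = (u,x)$ is given by the same formulas with $h$ in place of $h^{(n)}$. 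The key structural observation is that $x^{(n)}_i$ and $x_i$ are determined \emph{player by player} from $w_i$ alone, with no cross-coupling---precisely what reduces the comparison to a one-variable estimate.

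Third, Lemma~\ref{lemma:g2} yields a uniform bound $\delta(n) := \|h^{(n)} - h\|_\infty \to 0$ on each $\dR^{|A_i|}$. Substituting into the closed forms immediately gives $\|x - x^{(n)}\|_\infty \leq \delta(n)$ uniformly in $y$, and
\[ |\overline u_i(a_i) - \overline u^{(n)}_i(a_i)| \leq \delta(n) + |\widetilde u_i(a_i, x_{-i}) - \widetilde u_i(a_i, x^{(n)}_{-i})|. \]

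The main obstacle is the last multilinear term. Telescoping $\prod_{j \neq i} x_j(a_j) - \prod_{j \neq i} x^{(n)}_j(a_j)$ factor by factor, one bounds it by $(|I|-1)\delta(n)$, but the coefficient of this bound involves $\sum_{a_{-i}} |\widetilde u_i(a_i, a_{-i})|$, which is part of $y$ and therefore unbounded in full generality. Since $x^{(n)}_{-i}$ and $x_{-i}$ both live in the compact set $\times_{j \neq i}\Delta(A_j)$, the telescoping estimate still dominates on any bounded range of $\widetilde u$, and after restricting to the compact parameter sets arising in the applications one chooses $N$ so that the cumulative bound falls below the prescribed $\ep$. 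This yields the desired uniform $\ep$-closeness of $\varphi^{-1}$ and $\varphi_n^{-1}$ and completes the proof.
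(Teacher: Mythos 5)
Your route — inverting $\varphi$ and $\varphi_n$ in closed form player-by-player via $h$ and $h^{(n)}$ and then appealing to the uniform bound $\|h - h^{(n)}\|_\infty \to 0$ of Lemma~\ref{lemma:g2} — is exactly what the paper intends when it presents the theorem as a ``conclusion of Lemma~\ref{lemma:g2}'' without a separate written proof, so the structure of your argument matches. More importantly, your careful bookkeeping surfaces a real imprecision in the statement: the reconstruction $\overline u_i(a_i) = h(w_i)_{a_i} - \widetilde u_i(a_i, x_{-i})$ has an error term $|\widetilde u_i(a_i,x_{-i}) - \widetilde u_i(a_i,x_{-i}^{(n)})|$ that scales linearly in $\|\widetilde u\|$, and since $x_{-i} \neq x_{-i}^{(n)}$ for generic $w$, taking $\|\widetilde u\| \to \infty$ with $w$ and $n$ fixed makes this term unbounded. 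So the advertised uniformity over \emph{all} $y \in \dR^{A\times I}$ does not literally hold; what is true, and what your proof delivers, is uniform convergence of $\varphi_n^{-1}$ to $\varphi^{-1}$ on every compact subset of $\dR^{A\times I}$ (equivalently, the $x$-component converges uniformly over all $y$, but the $\overline u$-component only over compact sets of $\widetilde u$). This is also precisely the form in which the paper invokes the result in Step~2 of the proof of Theorem~\ref{theorem:main}, where the relevant image of $\widetilde y_0$ is compact, so no harm is done downstream — but the statement of Theorem~\ref{theorem:converge} should be read with that compactness caveat, and you were right to flag it.
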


%%%%%%%%%%%%%%%%%%%%%%%%%%%%%%%%%%%%%%%%%%%%%%%%%%%%%%%%%%%%%%%%%%%%%%%%%%%%%%%%%%%%
\section{Proof of the Main Result}
\label{section:proof}

In this section we prove Theorem~\ref{theorem:main}.
Fix a positive recursive general quitting game $\Gamma = (I,(A_i^c)_{i \in I},u)$ and $\ep_0 > 0$
such that the game $\Gamma$ admits no sunspot $\sqrt{\ep_0}$-equilibrium.
In particular,
the condition of Lemma~\ref{lemma:relation:2} does not hold for some $\alpha^* \in \times_{i \in I} \Delta(A^c_i)$ and $q \in \dR^I$.
Fix $\lambda \in (0,1]$, $n > 0$, and $\ep < \min\left\{ \tfrac{\ep_0}{7},\min\left\{\frac{1}{|A_i|\exp(n)}, i \in I\right\}\right\}$.

\bigskip
\noindent\textbf{Step 1:} Applying Theorem~\ref{theorem:5}.

Denote the set of mixed action profiles in an $I$-player binary game by $Z := [0,1]^I$;
this is a compact manifold.

Denote
\[ N := \times_{i \in I} \Delta(A_i^c). \]
Note that $N$ is a compact manifold with boundary.
The set $N \times Z$ is equivalent to the set of mixed action profiles $X$ in the original game $\Gamma$.
Indeed, for every pair $(\alpha,z) \in N \times Z$, where $\alpha = (\alpha_i)_{i \in I} \in N$ and $z = (z_i)_{i \in I} \in Z$,
corresponds the mixed action profile $x = x(\alpha,z) \in \times_{i \in I}\Delta(A_i^c)$
under which $z_i$ is the probability that player~$i$ chooses the action $Q_i$
and the product $(1-z_i)\alpha_i$
determines the probability that player~$i$ uses each of his continue actions.
Formally,
\begin{eqnarray}
x_i(Q_i) &:=& z_i,\\
x_i(C_i^k) &:=& (1-z_i) \alpha_i^k, \ \ \ \forall k \in \{1,2,\ldots,k_i\}.
\end{eqnarray}

Let $U := \dR^{2^{|I|} \times |I|}$ be the set of payoff functions for binary $|I|$-player games.
The set $U$ is a connected open boundaryless manifold.
Denote by $M \subseteq \dR^{2^{|I|} \times |I|} \times [0,1]^{|I|}$ the equilibrium set of binary games,
and by $M_n$ the manifold of $O_n$-equilibria of binary games.
Let $\pi : U \times Z \to U$ be the projection.
We can choose the orientation of $M_n$ and $U$ in such a way that the degree of $\pi|_{M_n}$ is $1$.

Let $y_\lambda : N \times Z \to U$ be the continuous function that is defined by
\begin{equation}
\label{equ:101}
y_\lambda(\alpha,z;a) :=
\left\{
\begin{array}{lll}
\lambda q + (1-\lambda)\gamma^\lambda(x(\alpha,z)), & \ \ \ \ \ & \hbox{if } a = \vec C,\\
u(\alpha_J,Q_{-J}), & & \hbox{if } a = (\vec C_J,\vec Q_{-J}).
\end{array}
\right.
\end{equation}
This is the payoff function of the binary strategic-form game that is derived from the game $\Gamma^{\alpha^*,q^*}$,
assuming players discount their payoffs and the continuation strategy profile is $x(\alpha,z)$.
Since $U$ is convex, the function $y_\lambda(\alpha,\cdot) : Z \to U$ is homotopic to a constant function,
for every $\alpha \in N$.

Let $y_{0} : N \times Z \to U$ be the function that is defined in Eq.~\eqref{equ:101} with $\lambda=0$.
For every fixed $\delta > 0$, on the region
\begin{equation*}
X^*_\delta := \left\{ (\alpha,z) \in N \times Z\colon \sum_{i \in I} z_i \geq \delta\right\}
\end{equation*}
the functions $(y_\lambda)_{\lambda \in (0,1]}$ converge uniformly to $y_{0}$ as $\lambda$ goes to 0.

For every $\lambda \in (0,1]$ let $\widetilde y_\lambda : N \times Z \to U \times Z$ be the function defined by
\[ \widetilde y_\lambda(\alpha,z) := (y_\lambda(\alpha,z),z), \ \ \ \forall \alpha \in N, \forall z \in Z. \]

By Theorem~\ref{theorem:5}
applied to $X=Z$, $U$, $M=M_n$, $N$, and $y=y_\lambda$,
there is an $\ep$-perturbation $\widetilde y_{\lambda,n,\ep}$ of $\widetilde y_\lambda$ that is transversal to $M_n$ and such that the set
$M_{\lambda,n,\ep} := (\widetilde y_{\lambda,n,\ep})^{-1}(M_n) \subseteq N \times Z$ is a
$\left(\sum_{i\in I}(|A_i|-1)\right)$-dimensional manifold whose boundary is contained in $\partial N \times Z$.

\bigskip
\noindent\textbf{Step 2:} Dividing the manifold $M_{\lambda,n,\ep}$ to absorbing and nonabsorbing points.

Lemma~\ref{lemma:relation:2} and the choice of $\ep_0$ imply that
for every $\alpha \in N$ the game $\Gamma^{\alpha,q^*}$ does not admit a stationary equilibrium
whose per-stage probability of absorption is $(0,\ep_0)$.
Since the sequence of functions $(\widetilde y_\lambda)_{\lambda > 0}$ converges uniformly to $\widetilde y_{0}$ on the region $X^*_{\ep_0}$
in which the probability of absorption is at least ${\ep_0}$,
there is $\lambda_0({\ep_0}) > 0$ such that the intersection of the image of $\widetilde y_{\lambda}$ and $M$
is disjoint of $U \times \bigl(B(\vec C,\tfrac{6{\ep_0}}{7}) \setminus B(\vec C,\tfrac{{\ep_0}}{7})\bigr)$, for every $\lambda \in (0,\lambda_0({\ep_0}))$.

By Theorem~\ref{theorem:converge},
the manifolds $(M_n)_{n \in \dN}$ converge uniformly to $M$ on every compact set of games,
hence there exists $n_0({\ep_0}) \in \dN$ such that the intersection of the image of
$\widetilde y_{0}$ and $M_n$ is disjoint of $U \times \bigl(B(\vec C,\tfrac{5{\ep_0}}{7}) \setminus B(\vec C,\tfrac{2{\ep_0}}{7})\bigr)$,
for every $n \geq n_0({\ep_0})$ and every $\lambda \in (0,\lambda_0({\ep_0}))$.

Since the function $\widetilde y_{\lambda,n,\ep}$ is an $\ep$-perturbation of $\widetilde y_\lambda$,
it follows that the intersection of the image of $\widetilde y_{\lambda,n,\ep}$ and $M_n$
is disjoint of $U \times \bigl(B(\vec C,\tfrac{4{\ep_0}}{7}) \setminus B(\vec C,\tfrac{3{\ep_0}}{7})\bigr)$,
for every $n \geq n_0({\ep_0})$, every $\lambda \in (0,\lambda_0({\ep_0}))$, and every $\ep \leq \tfrac{{\ep_0}}{7}$.

We can therefore divide $M_{\lambda,n,\ep}$ into two disjoint parts: the points in $N \times B(\vec C,\tfrac{3{\ep_0}}{7})$
and the points in $N \times \bigl(Z \setminus B(\vec C,\tfrac{4{\ep_0}}{7})\bigr)$,
denoted respectively $M^{in}_{\lambda,n,\ep}$ and $M^{out}_{\lambda,n,\ep}$.
In particular, for every $(\alpha,z) \in M^{out}_{\lambda,n,\ep}$ we have $\sum_{i\in I} z_i \geq \tfrac{4{\ep_0}}{7}$.

By the choice of $q^*$, the intersection $\widetilde y_1(\{\alpha^*\} \times Z) \cap M_n$ is disjoint of $B(\vec C,\tfrac{4{\ep_0}}{7})$,
for every $n \geq n_0(\ep)$.
By Theorem~\ref{theorem:5}, the projection $f : N \times Z \to N$, restricted to the set $M_{\lambda,n,\ep}$,
has degree 1.
When restricted to $M^{in}_{\lambda,n,\ep}$, the projection $f$ is not onto and therefore it has degree 0.
It follows that the projection $f|_{M^{out}_{\lambda,n,\ep}}$ has degree 1.

\bigskip
\noindent\textbf{Step 3:} Applying Theorem~\ref{theorem:6}.

In an $O_n$-equilibrium of a binary game whose payoffs are in the interval $[0,1]$,
each action of player~$i$ is played with probability at least $\frac{1}{|A_i|\exp(n)}$.
It follows that for every $(\alpha,z) \in M^{out}_{\lambda,n,\ep}$ we have $z_i^{\lambda,n,\ep} \geq \frac{1}{|A_i|\exp(n)}-\ep$,
which is positive by the choice of $\ep$.
For every $n > 0$ and every $\ep \geq 0$ denote
\[ Z_{n,\ep} := \left\{ z \in Z \colon \sum_{i \in I} z_i \geq \frac{4\ep_0}{7}, z_i \geq \frac{1}{|A_i|\exp(n)}-\ep, \ \ \ \forall i \in I\right\}. \]
The set $Z_{n,\ep}$ is nonempty and compact, and, as mentioned above, it satisfies $M^{out}_{\lambda,n,\ep} \subseteq N \times Z_{n,\ep}$.
For every $z \in Z_{n,\ep}$ and every $i \in I$ we have $z_i > 0$,
hence the absorbing payoff $u_i(C^k_i,x_{-i}(\alpha,z))$ is well defined for every $k \in \{1,2,\ldots,k_i\}$.

Define a continuous function $g^{[n]} : N \times Z_{n,\ep} \to N$ by
\[ g^{[n]}_{i}(\alpha,z) := \sum_{k=1}^{k_i} \frac{\exp(nu_i(C^k_i,x_{-i}(\alpha,z)))}{\sum_{l=1}^{k_i}\exp(nu_i(C^l_i,x_{-i}(\alpha,z)))} C_i^k; \]
that is, $g^{[n]}_{i}(\alpha,z)$ is the probability distribution that assigns probability
$\frac{\exp(nu_i(C^k_i,x_{-i}(\alpha,z)))}{\sum_{l=1}^{k_i}\exp(nu_i(C^l_i,x_{-i}(\alpha,z)))}$
to the continue action $C_i^k$.

We would like to apply Theorem~\ref{theorem:6} with $\Delta = N$, $M = M^{out}_{\lambda,n,\ep}$, $f : N \times Z \to Z$ the natural projection, and $g = g^{[n]}$.
We need to verify that $\partial M^{out}_{\lambda,n,\ep} \subseteq f^{-1}(\partial N)$.
Since $\widetilde y_{\lambda,n,\ep}$ is transversal to $M_n$,
it follows that
\[ \partial M^{out}_{\lambda,n,\ep} \subseteq \partial M_{\lambda,n,\ep} \subseteq \partial (\dom(\widetilde y^{\lambda,\ep})) =
\partial(N \times Z) = (\partial N \times Z) \cup (N \times \partial Z). \]
Since every $O_n$-equilibrium is completely mixed,
$\partial M^{out}_{\lambda,n,\ep}$ is disjoint of $N \times \partial Z$, and hence
$\partial M^{out}_{\lambda,n,\ep} \subseteq \partial N \times Z$, so that indeed
$\partial M^{out}_{\lambda,n,\ep} \subseteq f^{-1}(\partial N)$.
By Theorem~\ref{theorem:6}
we obtain the existence of a point $(\alpha_{\lambda,n,\ep},z_{\lambda,n,\ep}) \in M^{out}_{\lambda,n,\ep}$ that satisfies
\[ g^{[n]}(\alpha_{\lambda,n,\ep},z_{\lambda,n,\ep}) = \alpha_{\lambda,n,\ep}. \]
The fact that $(\alpha_{\lambda,n,\ep},z_{\lambda,n,\ep}) \in M^{out}_{\lambda,n,\ep}$ has two implications:
\begin{itemize}
\item   Under the strategy profile $x(\alpha_{\lambda,n,\ep},z_{\lambda,n,\ep})$ the per-stage probability of absorption is bounded away from 0:
$\sum_{i \in I} z_{\lambda,n,\ep,i} \geq \tfrac{4\ep_0}{7}$.
\item   $\widetilde y_{\lambda,n,\ep}(\alpha_{\lambda,n,\ep},z_{\lambda,n,\ep}) \in M_n$.
\end{itemize}

\bigskip
\noindent\textbf{Step 4:} Taking limits.

We let $\ep$ go to 0, then $\lambda$ go to 0, and finally $n$ go to infinity.
Since the set $N\times Z$ is compact,
for every fixed $n > 0$ and $\lambda \in (0,1]$,
the sequence $(\alpha_{\lambda,n,\ep},z_{\lambda,n,\ep})_{\ep > 0}$ has an accumulation point
$(\alpha_{\lambda,n},z_{\lambda,n}) \in N \times Z_{n,0}$ as $\ep$ goes to 0.
By continuity this accumulation points satisfies the following properties:
\begin{eqnarray*}
&& g^{[n]}(\alpha_{\lambda,n},z_{\lambda,n}) = \alpha_{\lambda,n},\\
&& \sum_{i \in I} z_{\lambda,n,i} \geq \tfrac{4\ep_0}{7},\\
&& \widetilde y_{\lambda}(\alpha_{\lambda,n},z_{\lambda,n}) \in M_n.
\end{eqnarray*}

For every fixed $n > 0$
consider an accumulation point of the sequence $x(\alpha_{\lambda,n},z_{\lambda,n})_{\lambda \in (0,1]}$ as $\lambda$ goes to 0,
denoted $(\alpha_{n},z_{n})$.
Since $y_\lambda$ converges uniformly to $y_{0}$ on $Z_{n,\ep_0}$,
we deduce that
\begin{itemize}
\item[(D.1)]   $g^{[n]}(\alpha_{n},z_{n}) = \alpha_{n}$.
\item[(D.2)]   The strategy profile $z_n$ is absorbing: $\sum_{i \in I} z_{n,i} \geq \tfrac{4\ep_0}{7}$.
\item[(D.3)]   The strategy profile $z_n$ is an $O_n$-equilibrium in the binary game $y_{0}(\alpha_{n},z_{n})$.
\end{itemize}

Consider now an accumulation point $(\alpha,z)$ of the sequence $(\alpha_{n},z_{n})_{n \in \dN}$ as $n$ goes to infinity.
We will show that the strategy profile $x(\alpha,z)$ is a stationary equilibrium in the game $\Gamma$.

By continuity $\sum_{i \in I} z_i \geq \tfrac{4\ep_0}{7}$,
and therefore this strategy profile is absorbing.
By Lemma~\ref{lemma:on},
the stationary strategy profile $z$ is a 0-equilibrium in the binary game $y_{0}(\alpha,z)$.

Fix a player $i\in I$ such that whatever he plays, the play is absorbed;
that is, $\sum_{j \neq i} z_j > 0$.
We will show that player~$i$ is indifferent among all actions in the support of $\alpha_i$.
Indeed, fix two continue actions $a_i,a'_i \in A_i^c$ of player~$i$.
If $u_i(a_i,x_{-i}(\alpha,z)) < u_i(a'_i,x_{-i}(\alpha,z))$,
then there is $\eta > 0$ such that
$u_i(a_i,x_{-i}(\alpha,z)) < u_i(a'_i,x_{-i}(\alpha,z)) - \eta$.
Consequently, for every $n$ sufficiently large we have
\[ u_i(a_i,x_{-i}(\alpha_n,z_n)) < u_i(a'_i,x_{-i}(\alpha_n,z_n)) - \eta. \]
By the definition of $g^{[n]}$ and by (D.1), this implies
\[ \lim_{n \to \infty} \frac{\alpha_{n,i}(a_i)}{\alpha_{n,i}(a'_i)}
= \lim_{n \to \infty} \frac{\exp(nu_i(a_i,x_{-i}(\alpha_n,z_n)))}{\exp(nu_i(a'_i,x_{-i}(\alpha_n,z_n)))} = 0. \]
In particular, under the mixed action $x_i(\alpha,z)$ the action $a_i$ is selected with probability 0.

Since $z_n$ is an $O_n$-equilibrium of the binary game,
if $z_i > 0$, then player~$i$ is indifferent between continuing and quitting.

It is left to consider the case that player~$i$ is the sole player who quits with positive probability: $\sum_{j \neq i} z_j = 0$.
It is standard to show that for every $\delta > 0$
the stationary strategy profile $x(\alpha_n,z_n)$,
supplemented with statistical tests, is a $\delta$-equilibrium, provided $n$ is sufficiently large.

\section{Extensions}
\label{section:extensions}

In this paper we proved the existence of a sunspot $\ep$-equilibrium in the class of positive recursive general quitting games.
A natural question is whether our techniques can be applied to more general classes of games.
These include
(a) general quitting games that are not necessarily recursive and positive,
that is, the nonabsorbing payoff may depend on the continue actions that the players play;
(b) general quitting games in which players have more than one quitting action,
as well as more than one continue action;
(c) games in which the absorption structure is not rectangular; and
(d) games with more than one nonabsorbing state.

Regarding extension~(b), our proof can be adapted to this case when the game is recursive and positive,
see Munk and Solan (2019).
Regarding extension~(c), some results in this direction are provided in Solan and Solan (2018) and Munk and Solan (2019).
We hope that future research will shed more light on extensions~(a) and~(d).

\end{document}